\def\Url@twoslashes{\mathchar`\/\@ifnextchar/{\kern-.2em}{}}
\g@addto@macro\UrlSpecials{\do\/{\Url@twoslashes}}
\newtheorem{theorem}{Theorem}[section]
\newtheorem{lemma}[theorem]{Lemma}
\newtheorem{prop}[theorem]{Proposition}
\newtheorem{sub-lemma}[theorem]{Sub-lemma}
\theoremstyle{definition}
\newtheorem{remark}[theorem]{Remark}
\newcommand{\suchthat}{\;\ifnum\currentgrouptype=16 \middle\fi|\;}
\newcommand{\mycomment}[1]{}
\DeclareMathOperator*{\im}{\mathrm{im}}
\DeclareMathOperator*{\diag}{\mathrm{diag}}
\DeclareMathOperator*{\Lie}{\mathrm{Lie}}
\title{Algebraicity of ratios of special $L$-values for $\mathrm{GL}(n)$}
\author{Ankit Rai \and Gunja Sachdeva}
\date{}
\begin{document}
	
	\maketitle

	\begin{abstract}
		\noindent
		We prove, under certain assumptions, the algebraicity of the ratio $L(m, \Pi \times \chi)/L(m, \Pi \times \chi')$, where $\Pi$ is a cuspidal automorphic cohomological unitary representation of $\mathrm{GL}_n(\mathbb{A}_\mathbb{Q})$, and $\chi$, $\chi'$ are finite-order Hecke characters such that $\chi_{\infty} = \chi'_{\infty} = \mathrm{sgn}^{r}$, and $m, r$ are specific positive integers which depend only on $\Pi_{\infty}$.
		The methods in this article generalize those in the work of Mahnkopf [{\em Cohomology of arithmetic groups, parabolic subgroups and the special values of $L$-functions of $\mathrm{GL}(n)$}, J. Inst. Math. Jussieu, 4 (2005)].
	\end{abstract}

	MSC 2020 codes : 11F67, 11F75, 11F70.

	\tableofcontents

	\section{Introduction}
	The investigation into the algebraic properties of critical values of $L$-functions has a rich history, notably concerning the $L$-functions associated with holomorphic modular forms, a field pioneered by Manin and Shimura in the 1960s.
	In 1979, Deligne \cite{Deligne-L-values} formulated conjectures regarding the special values of $L$-functions associated with motives. Given the close relationship between automorphic representations and motives, it is natural to anticipate similar conjectures concerning the algebraic nature of special values of automorphic $L$-functions.
	This article provides partial confirmation of Deligne's conjecture within the realm of automorphic representations for the specific case under consideration. To present the main theorems of this article, we now introduce the necessary terminology.\\
	
	\noindent
	Let $G_n$ denote the algebraic group $\mathrm{GL}(n)$ defined over $\mathbb{Q}$. Let $T_n \subset B_n \subset G_n$ respectively denote the maximal torus given by the diagonal matrices and the Borel subgroup given by the upper triangular matrices.
	The results in this article are already known in the case when $n$ is even, due to Mahnkopf \cite[p. 557, Remark]{Mahnkopf-05}.
	We proceed, therefore, with the assumption that $n$ is odd.
	Consider the natural identification $\mathbb{Z}^n \cong X^*(T_n)$ given by $(a_1, \dots, a_n) \mapsto \big(\diag (t_1, \dots, t_n) \mapsto t^{a_1}_{1}\cdots t^{a_n}_n \big)$. Let $\mu = (\mu_1, \dots, \mu_{\lfloor n/2 \rfloor}, 0, -\mu_{\lfloor n/2 \rfloor}, \dots, -\mu_1)$ be a character of $T_n$ and $\lambda = (\mu_1, \dots, \mu_{\lfloor n/2 \rfloor}, -\mu_{\lfloor n/2 \rfloor}, \dots, -\mu_1)$ be a character of $T_{n-1}$.
	Assume that $\mu$ is a dominant character. Let $E_{\mu}$ and $E_{\lambda}$ be the finite-dimensional irreducible representations with highest weights $\mu$ and $\lambda$ respectively.
	Let $P_n \subset G_n$ denote the standard parabolic subgroup of $G_n$ of type $(2, \dots, 2, 1)$, and let $P_{n-1} \subset G_{n-1}$ denote the standard parabolic subgroup of type $(2, \dots, 2, 1, 1)$.\\
	
	\noindent
	Let $\mathbb{A}$ be the ring of adeles of $\mathbb{Q}$, and $\mathbb{A}_f$ be the ring of finite adeles. Consider a regular algebraic cuspidal automorphic representation $\Pi$ of $G_n(\mathbb{A})$, assumed to be cohomological with respect to a self-dual representation $E_{\mu}$ with highest weight $\mu$.
	Any (irreducible) automorphic representation admits a factorization as a tensor product $\Pi = \Pi_f \otimes \Pi_{\infty}$, where $\Pi_f$ (resp. $\Pi_{\infty}$) is a representation of $G_n(\mathbb{A}_f)$ (resp. $G_n(\mathbb{R})$). Let $\epsilon(\Pi_{\infty}) = \mathrm{sgn}^{(n-1)/2}$.
	Given two distinct finite-order characters $\chi_{n-2}$ and $\chi_{n-1}$ such that $\chi_{i \infty} = \epsilon(\Pi_{\infty})\mathrm{sgn}$ for $i =n-2, n-1$, define $\chi = \chi_{n-2}|\cdot|^{\frac{2\mu_{\lfloor n/2 \rfloor}+1}{2}} \otimes \chi_{n-1}|\cdot|^{-\frac{2\mu_{\lfloor n/2 \rfloor}+1}{2}}$ as the one-dimensional representation of the torus $diag(1,\dots, 1,\ast, \ast)$.
	Recall that $P_{n-1}$ is a standard parabolic subgroup with a Levi $M_{P_{n-1}} \cong \mathrm{GL}(2) \times \dots \times \mathrm{GL}(2) \times \mathbb{G}_{m} \times \mathbb{G}_{m}$.
	Define $M'_{P_{n-1}} \subset M_{P_{n-1}}$ to be the subgroup consisting of the product of the copies of $\mathrm{GL}(2)$ in $M_{P_{n-1}}$.
	Let $\pi = \pi_1 \otimes \dots \otimes \pi_{\lfloor (n-3)/2 \rfloor}$ be a unitary cuspidal automorphic representation of $M'_{P_{n-1}}(\mathbb{A})$ such that $\pi_{\infty} \cong D_{2\mu_1+n-2} \otimes \dots \otimes D_{2\mu_i + n-2i} \otimes \dots \otimes D_{2\mu_{\lfloor n/2 -1\rfloor} + 3}$. Consider the representation
	\[
	\Sigma \colonequals \mathrm{Ind}^{\mathrm{GL}(n-1, \mathbb{A})}_{P_{n-1}(\mathbb{A})}\left(\pi \otimes \chi \right)
	\]
	of $G_{n-1}(\mathbb{A})$, where $\mathrm{Ind}$ denotes the normalized parabolic induction. The choice of $\pi$ and $\chi$ is made to ensure that $\Sigma_{\infty}$ is cohomological with respect to the finite dimensional-representation $E_{\lambda}$ (See Proposition \ref{prop-coho}).\\
	
	\noindent
	A preliminary aim of this article is to investigate the algebraicity properties of the $L$-value $L(\frac{1}{2}, \Pi \times \Sigma)$ for the Rankin--Selberg $L$-function $L(s, \Pi \times \Sigma)$.
	The choice of $s = \frac{1}{2}$ is forced on us as it is the only critical point of the $L$-function $L(s, \Pi \times \Sigma)$ (See \S \ref{subsec:critical-sets-L-function}).
	One of the key ideas implemented in this article, extensively studied by many authors, including Mahnkopf \cite{Mahnkopf-98, Mahnkopf-05}, Raghuram \cite{Raghuram-12, Raghuram-16}, Raghuram-Sachdeva \cite{Raghuram-Sachdeva-17}, and Grobner \cite{Grobner-Monatsh} among others, is that the $L$-value $L\left( \frac{1}{2}, \Pi \times \Sigma \right)$ admits a cohomological/topological interpretation.
	Before proceeding, we first lay down the assumptions to be used in the rest of this article. With $\pi$ and $\chi$ as above, we assume that the $\pi_i$'s are distinct unitary cuspidal automorphic representations, and $\chi_{n-2} \neq \chi_{n-1}$.
	We say that $\pi$ satisfies $\textbf{Assumption}(\mu, \eta_1, \eta_2)$, for two distinct finite-order Hecke characters $\eta_1$ and $\eta_2$ such that $\eta_{1\infty} = \eta_{2\infty} = \epsilon(\Pi_{\infty})\mathrm{sgn}$, if the following holds
	
	\begin{equation} \label{eqn:nonvanishing-central-L-values}
		L\left( \frac{2\mu_{\lfloor n/2 \rfloor}+1}{2}, \pi^{\vee}_k \times \eta_1 \right) L\left( \frac{2\mu_{\lfloor n/2 \rfloor}+1}{2}, \pi_k \times \eta^{-1}_2 \right) \neq 0 \quad \forall \quad 1 \leq k \leq (n-3)/2.
	\end{equation}

	\noindent
	The nonvanishing of twists of $L$-values for cuspidal automorphic representations is a topic of great interest; see for instance \cite{Rohrlich-89} for certain results in this direction.
	Typically, nonvanishing results fix $\pi$ and prove the existence of several characters $\chi$ such that $L(\frac{1}{2}, \pi \times \chi)$ is non-zero.
	However, our assumptions in the case when $\mu_{\lfloor n/2 \rfloor} = 0$ require us to fix two characters and construct representations $\pi$ such that the central $L$-values in the equation \eqref{eqn:nonvanishing-central-L-values} do not vanish.
	A few results on simultaneous nonvanishing of $L$-values can be found in \cite{Michel-Vanderkam, Dmitrov-Januszewski-Raghuram}.
	We now state the first theorem of this article. This theorem will be applied to demonstrate the central result of this article concerning the algebraicity of the ratios of the twists of special $L$-values of $G_n = \mathrm{GL}(n)$.

	\begin{theorem} \label{thm:main-theorem}
		Let $\Pi$ and $\Sigma$ be as above. Assume that $\pi$ satisfies $\mathbf{Assumption}(\mu, \chi_{n-2}, \chi_{n-1})$. Then there exists a non zero complex number $p(\Pi_\infty, \Sigma_{\infty})$ such that
		\[
		L(\tfrac{1}{2}, \Pi \times \Sigma) \sim_{~\overline{\mathbb{Q}}} p(\Pi_f)p(\Sigma_f) p(\Pi_{\infty}, \Sigma_{\infty})
		\]
		where $\sim_{~\overline{\mathbb{Q}}}$ represents equality up to an element of the algebraic closure $\overline{\mathbb{Q}}$ of $\mathbb{Q}$.
		\textup{(}As $L(\tfrac{1}{2}, \Pi \times \Sigma) \sim_{~\overline{\mathbb{Q}}} L^S(\tfrac{1}{2}, \Pi \times \Sigma)$, we can consider the partial $L$-function $L^S$ in which the Euler product is taken over places outside $S$ containing the places of ramification of $\Pi$ or of $\Sigma$.\textup{)}
	\end{theorem}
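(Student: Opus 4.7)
The plan is to follow the cohomological strategy of Mahnkopf \cite{Mahnkopf-05}, suitably adapted to the case of odd $n$. The central idea is that the Rankin--Selberg integral representation of $L(\tfrac{1}{2}, \Pi \times \Sigma)$ agrees, up to ramified and archimedean local factors, with a Poincar\'e-type cup product pairing in the cohomology of the $G_{n-1}$ locally symmetric space. Both sides carry natural $\overline{\mathbb{Q}}$-rational structures, and the resulting comparison will produce the desired algebraicity statement.

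Concretely, I would first express $L(\tfrac{1}{2}, \Pi \times \Sigma)$ via the Jacquet--Piatetskii-Shapiro--Shalika integral
\[
Z(\phi_\Pi, E_{f_\Sigma}) \;=\; \int_{G_{n-1}(\mathbb{Q}) \backslash G_{n-1}(\mathbb{A})} \phi_{\Pi}(\iota(h))\, E(h, f_{\Sigma})\, dh,
\]
where $\phi_{\Pi} \in \Pi$ is a cusp form, $\iota \colon G_{n-1} \hookrightarrow G_n$ is the natural embedding, and $E(h, f_{\Sigma})$ is the Eisenstein series on $G_{n-1}$ attached to a $K$-finite section $f_{\Sigma}$ of $\Sigma = \mathrm{Ind}_{P_{n-1}(\mathbb{A})}^{G_{n-1}(\mathbb{A})}(\pi \otimes \chi)$. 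Since $\Pi_{\infty}$ is cohomological for $E_{\mu}$ and $\Sigma_{\infty}$ is cohomological for $E_{\lambda}$ (by Proposition \ref{prop-coho}), these automorphic forms lift to classes $[\phi_\Pi]$ and $[E(f_\Sigma)]$ in the appropriate cuspidal and Eisenstein cohomology groups; the branching of $E_{\mu}|_{G_{n-1}}$ against $E_{\lambda}$ yields a canonical pairing of coefficient systems, through which $Z(\phi_\Pi, E_{f_\Sigma})$ is interpreted as a cup product evaluated on the fundamental class.

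Next, I would fix $\overline{\mathbb{Q}}$-rational bases of the $\Pi_f$-isotypic and $\Sigma_f$-isotypic pieces of the respective cohomologies and introduce $p(\Pi_f)$ and $p(\Sigma_f)$ as the scalars relating these rational bases to the Whittaker (respectively induced-model) normalizations of $\phi_{\Pi}$ and $f_{\Sigma}$. The residual archimedean discrepancy between the topological cup product at infinity and the explicit local $L$-factor $L(\tfrac{1}{2}, \Pi_{\infty} \times \Sigma_{\infty})$ then provides the definition of $p(\Pi_\infty, \Sigma_\infty)$. Assembling these identifications yields the theorem up to $\overline{\mathbb{Q}}^\times$.

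The role of $\mathbf{Assumption}(\mu, \chi_{n-2}, \chi_{n-1})$ is twofold: the standard intertwining operator normalizing $f_{\Sigma}$ factors through precisely the central values
\[
L\bigl( \tfrac{2\mu_{\lfloor n/2 \rfloor}+1}{2}, \pi^{\vee}_k \times \eta_1 \bigr) \quad \text{and} \quad L\bigl( \tfrac{2\mu_{\lfloor n/2 \rfloor}+1}{2}, \pi_k \times \eta^{-1}_2 \bigr)
\]
appearing in \eqref{eqn:nonvanishing-central-L-values}, whose nonvanishing is required both for a well-defined rational structure on $\Sigma$ and for nontriviality of the associated Eisenstein cohomology class. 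I expect the two main obstacles to be: (i) constructing the $\overline{\mathbb{Q}}$-rational structure on Eisenstein cohomology compatibly with the rational structures on the inducing data $\pi$ and $\chi$, which requires a delicate tracking of the normalized intertwining operator through the $P_{n-1}$-strata of the Borel--Serre boundary; and (ii) proving $p(\Pi_{\infty}, \Sigma_{\infty}) \ne 0$, a concrete computation in relative Lie-algebra cohomology based on the branching map $E_{\mu}|_{G_{n-1}} \to E_{\lambda}^{\vee}$ and the archimedean zeta integral determined by $\Pi_{\infty}$ and $\Sigma_{\infty}$. These steps mirror Mahnkopf's treatment in the even-$n$ case, but now forced through the parabolic $P_{n-1}$ of type $(2,\dots,2,1,1)$ and the corresponding weight $\lambda$.
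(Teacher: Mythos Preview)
Your outline is essentially the paper's own approach: realize $L(\tfrac12,\Pi\times\Sigma)$ via the Rankin--Selberg period, interpret it cohomologically as a Poincar\'e cup product of a cuspidal class from $\Pi$ and an Eisenstein class from $\Sigma$, and compare $\overline{\mathbb{Q}}$-structures coming from Whittaker models versus Betti cohomology.

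Two points where your description diverges from what actually happens deserve correction. First, the role of $\mathbf{Assumption}(\mu,\chi_{n-2},\chi_{n-1})$: it is \emph{not} used to put a rational structure on $\Sigma_f$ (that comes for free from the rational structure on the inducing data, cf.\ \S\ref{subsubsec:rational-structure-global}), nor directly for nontriviality of the Eisenstein class. Rather, it guarantees that the normalizing ratios of the global intertwining operators $M(w,s)$ for $w\in W(M_{P_{n-1}})$ are nonzero (Lemma~\ref{lemma:intertwine-iso}), hence that these operators are isomorphisms, hence that the Eisenstein series is \emph{holomorphic} at the evaluation point $s=2\mu_{\lfloor n/2\rfloor}+1$. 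Injectivity of $\mathrm{Eis}^{\mathrm{coh}}$ is a separate argument using that the intertwining operator for the simple reflection $w_{e_{n-2}-e_{n-1}}$ kills $\tau_{n-1}$, so the constant term along $P_{n-1}$ is the identity. Second, the nonvanishing $p(\Pi_\infty,\Sigma_\infty)\neq 0$ is not a ``concrete computation'' you can expect to carry out by hand: the paper invokes Sun's nonvanishing theorem \cite{Sun-nonvanishing}, which asserts that the archimedean Rankin--Selberg pairing is nonzero on the minimal $K$-type, and this is a substantial external input rather than a branching calculation.
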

	
	\noindent
	The quantities $p(\Pi_f)$ and $p(\Sigma_f)$ arise from the comparison of the two rational structures on the representations; one arising from the Whittaker model (that is, via Fourier coefficients) and the other obtained by the restriction of the Betti rational structure under the maps in equation \eqref{eqn:whittaker-coho-intertwiner} below.
	These are commonly referred to as the Betti-Whittaker periods. Apriori these periods may be unrelated to those put forth by Deligne, but a relation is conjectured in \cite[\S L]{Blasius} (see also \cite[\S 6]{Raghuram-12}). A few related results can be found in \cite{Grobner-Harris, Grobner-Lin-17}.
	The period $p(\Sigma_f)$ decomposes into a product involving a non-zero complex number $\tilde{p}(\Sigma_f)$ (See \S \ref{subsec:rat-Whittaker-Betti-periods}) and a product of certain $L$-functions for the group $G_{2}$ (denoted by $C$ in \S \ref{subsec:global-whittaker-global-integral}).
	The complex number $p(\Pi_{\infty}, \Sigma_{\infty})$ is obtained by evaluating a pairing of two chosen differential forms at infinity, the latter eventually amounting to a sum of local Rankin--Selberg integrals for certain vectors in $\Pi_{\infty}$ and $\Sigma_{\infty}$ (See \S \ref{subsec:Poincare-pairing-Whittaker-vector}).
	Finally, Theorem \ref{thm:main-theorem} yields the main theorem of this article.
	\begin{theorem} \label{thm:rationality-ratio-L-functions-GL(n)}
		Let $\Pi$ be a unitary cuspidal automorphic representation of $\mathrm{GL}(n, \mathbb{A})$ which is cohomological with respect to the finite dimensional representation $E_{\mu}$.
		Let $\chi$ and $\chi'$ be two distinct finite-order algebraic Hecke characters such that $\chi_{\infty} = \chi'_{\infty} = \epsilon(\Pi_{\infty})\mathrm{sgn}$. Assume that either $n=5$ or $\mu_{\lfloor n/2 \rfloor} \geq 1$, then
		\[
		\frac{L(\mu_{\lfloor n/2 \rfloor} + 1, \Pi \times \chi')}{L(\mu_{\lfloor n/2 \rfloor} + 1, \Pi \times \chi)} \in \overline{\mathbb{Q}}^{\times}.
		\]
	\end{theorem}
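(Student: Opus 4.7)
The strategy is to deduce Theorem \ref{thm:rationality-ratio-L-functions-GL(n)} from Theorem \ref{thm:main-theorem} by applying the latter to two Rankin--Selberg setups that differ only in one Hecke character, so that taking the quotient cancels all common factors. To set this up, first pick an auxiliary finite order Hecke character $\eta$, distinct from both $\chi$ and $\chi'$, with $\eta_\infty = \epsilon(\Pi_\infty)\mathrm{sgn}$, and a cuspidal automorphic representation $\pi = \pi_1 \otimes \cdots \otimes \pi_{(n-3)/2}$ of $M'_{P_{n-1}}(\mathbb{A})$ whose archimedean component is as prescribed before Theorem \ref{thm:main-theorem} and such that $\pi$ satisfies both $\textbf{Assumption}(\mu, \chi, \eta)$ and $\textbf{Assumption}(\mu, \chi', \eta)$. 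Then form
\begin{equation*}
    \Sigma \colonequals \mathrm{Ind}^{G_{n-1}(\mathbb{A})}_{P_{n-1}(\mathbb{A})}\left(\pi \otimes \chi|\cdot|^{(2\mu_{\lfloor n/2 \rfloor}+1)/2} \otimes \eta|\cdot|^{-(2\mu_{\lfloor n/2 \rfloor}+1)/2}\right),
\end{equation*}
and $\Sigma'$ by replacing $\chi$ with $\chi'$. By Proposition \ref{prop-coho}, both are cohomological with respect to $E_\lambda$, and since $\chi_\infty = \chi'_\infty$ one has the crucial identity $\Sigma_\infty = \Sigma'_\infty$.

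The key computation is the factorization of the Rankin--Selberg $L$-function across the parabolic induction: at $s = 1/2$,
\begin{equation*}
    L(\tfrac{1}{2}, \Pi \times \Sigma) = \prod_{k=1}^{(n-3)/2} L(\tfrac{1}{2}, \Pi \times \pi_k) \cdot L(\mu_{\lfloor n/2 \rfloor}+1, \Pi \times \chi) \cdot L(-\mu_{\lfloor n/2 \rfloor}, \Pi \times \eta),
\end{equation*}
and analogously for $\Sigma'$ with $\chi$ replaced by $\chi'$; every factor other than the one involving $\chi$ appears identically in the expression for $\Sigma'$, so taking the ratio yields
\begin{equation*}
    \frac{L(\tfrac{1}{2}, \Pi \times \Sigma')}{L(\tfrac{1}{2}, \Pi \times \Sigma)} = \frac{L(\mu_{\lfloor n/2 \rfloor}+1, \Pi \times \chi')}{L(\mu_{\lfloor n/2 \rfloor}+1, \Pi \times \chi)}.
\end{equation*}
On the other hand, two applications of Theorem \ref{thm:main-theorem} give that this same ratio is $\overline{\mathbb{Q}}^{\times}$-equivalent to $p(\Sigma'_f)\, p(\Pi_\infty, \Sigma'_\infty) / \bigl(p(\Sigma_f)\, p(\Pi_\infty, \Sigma_\infty)\bigr)$, since the $p(\Pi_f)$ factors cancel. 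The archimedean periods agree because $\Sigma_\infty = \Sigma'_\infty$, and by Lemma \ref{lemma:whittaker-induced} each of $p(\Sigma_f)$ and $p(\Sigma'_f)$ decomposes as a transcendental quantity depending only on the common pieces together with products of $\mathrm{GL}(2)$ $L$-factors which, when restricted to the single slot where the two inductions differ, contribute only an algebraic scalar; this yields $p(\Sigma'_f)/p(\Sigma_f) \in \overline{\mathbb{Q}}^{\times}$ and completes the proof.

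The existence of $\pi$ simultaneously satisfying the two Assumptions is the main nontrivial input. When $\mu_{\lfloor n/2 \rfloor} \geq 1$ the evaluation point $(2\mu_{\lfloor n/2 \rfloor}+1)/2 \geq 3/2$ lies strictly outside the critical strip of the GL(2) $L$-functions in the Assumption, so the relevant $L$-values are automatically nonzero and any $\pi$ of the correct archimedean type works. The harder case is $n = 5$, $\mu_2 = 0$, where the two Assumptions together require the simultaneous nonvanishing of four central $L$-values of twists of a single GL(2) cusp form $\pi_1$ by the fixed characters $\chi, \chi', \eta$; here one invokes simultaneous nonvanishing theorems of the type due to Michel--VanderKam or Dimitrov--Januszewski--Raghuram. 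This regime is where I expect the main technical obstacle, while the rest of the argument reduces to the bookkeeping of periods and the $L$-function factorization described above.
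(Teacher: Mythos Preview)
Your overall architecture matches the paper's: form two induced representations $\Sigma(\pi,\chi,\eta)$ and $\Sigma(\pi,\chi',\eta)$, apply Theorem~\ref{thm:main-theorem} to each, and take the quotient so that $p(\Pi_f)$ and the archimedean period cancel. Two points, however, are handled too loosely.

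First, the assertion that $p(\Sigma'_f)/p(\Sigma_f)\in\overline{\mathbb{Q}}^\times$ is not a formality. Lemma~\ref{lemma:whittaker-induced} reduces this ratio to $v_{\Sigma'}(1)/v_{\Sigma}(1)$, and the Casselman--Shalika formula then expresses it as a ratio of genuine $\mathrm{GL}(2)\times\mathrm{GL}(1)$ and $\mathrm{GL}(1)\times\mathrm{GL}(1)$ $L$-values at the critical point $\tfrac12-\mu_{\lfloor n/2\rfloor}$ (and at $2\mu_{\lfloor n/2\rfloor}+2$): concretely, products of $L(\tfrac12-\mu_{\lfloor n/2\rfloor},\pi_i\times\chi^{-1})/L(\tfrac12-\mu_{\lfloor n/2\rfloor},\pi_i\times\chi'^{-1})$ over all $i$, together with a Dirichlet $L$-value ratio. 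These are not ``an algebraic scalar'' by inspection; the paper invokes Shimura's rationality theorem for $\mathrm{GL}(2)$ (and the elementary $\mathrm{GL}(1)$ case) to conclude. You need to name this input explicitly. Also, Lemma~\ref{lemma:whittaker-induced} requires $\chi_v\neq\eta_v$ at every bad place, so $\eta$ must be chosen via Grunwald--Wang, not just ``distinct from $\chi,\chi'$''.

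Second, in the case $n=5$, $\mu_2=0$, your plan demands a single $\pi$ with $L(\tfrac12,\pi^\vee\times\chi)$, $L(\tfrac12,\pi^\vee\times\chi')$, $L(\tfrac12,\pi\times\eta^{-1})$ all nonzero for \emph{prescribed} $\chi,\chi'$. The cited simultaneous nonvanishing results do not obviously deliver this (they typically fix the form and vary the twist, or work in families not matching your constraints). The paper avoids this entirely: it allows \emph{different} inducing data $\pi$ and $\pi'$, each a finite-order twist of the other by a character trivial at infinity, chosen (via Rohrlich's theorem applied one character at a time) so that the relevant single nonvanishing holds separately. The ratio $p(\Sigma'_f)/p(\Sigma_f)$ is then controlled using the observation that $p^{\mathrm{aut-B}}(\pi,\Sigma_\infty)$ and $p^{\mathrm{W-aut}}(\pi_f)$ are invariant under such twists. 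This twist trick is the substantive idea you are missing in the $n=5$ case.
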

	
	\noindent
	Before we delve into the history of the problem and the technical aspects of the current article, it is important to discuss the stated and unstated assumptions. We remark that $\mu_{\lfloor n/2 \rfloor}+1$ is a critical point of the $L$-functions $L(s, \Pi \times \chi)$ and $L(s, \Pi \times \chi')$ (see \S \ref{subsec:critical-sets-L-function}).
	In addition to the assumptions of nonvanishing of certain $L$-values mentioned in the statements of the theorems above, we also require that the $L$-values $L(1, \Pi \times \chi)$ and $L(1, \Pi \times \chi')$ do not vanish in order to deduce Theorem \ref{thm:rationality-ratio-L-functions-GL(n)} from Theorem \ref{thm:main-theorem}.
	Such nonvanishing results are known from the work of Jacquet and Shalika \cite{Sarnak-04}. If $\mu_{\lfloor n/2 \rfloor} \geq 1$, then the nonvanishing hypothesis is known from the general results of Shalika.
	Note that the above theorem provides additional evidence supporting a conjecture of Blasius \cite{Blasius} for regular algebraic cuspidal representations that are cohomological with respect to non-regular weights.\\

	\noindent
	In this manuscript we have chosen $\mathbb{Q}$ as the base field to simplify the representation theory at infinity. This simplification enables a clearer presentation of the methods needed to establish the holomorphy, injectivity, and Galois equivariance of the map ``Eis" (see \S \ref{sec:global-map-Eis-non-zero}) induced by an Eisenstein series construction.
	As pointed out to us by an anonymous referee, the methods of this article should extend to any number field.\\
	
	\noindent
	This article grew out of an attempt to generalize Mahnkopf's approach in \cite{Mahnkopf-05} to treat non-regular coefficient systems in the case where $n$ is odd. In his work, Mahnkopf starts with a coefficient system $E_{\mu}$, and an automorphic representation $\Pi$ that is cohomological with respect to $E_{\mu}$.
	He then constructs a \textit{boundary datum}, in particular a choice of a coefficient system, say $E_{\lambda}$, on $G_{n-1}$ ensuring that the induced representation $\Sigma$ on the smaller group $G_{n-1}$ can be realized in the boundary cohomology of $S_{n-1}$ with the chosen coefficient system $E_{\lambda}$.
	This is elaborated in detail in \cite[\S 4.3.1]{Mahnkopf-05}. It is not always possible to make such a choice with parabolic subgroup of the type $(n-3,1,1)$. Instead, Mahnkopf suggests a generalization by considering a parabolic subgroup of the type $(2, \dots, 2, 1, 1)$ (see \cite[p.635]{Mahnkopf-05}), denoted by $P_{n-1}$ in this article.
	When $\Sigma$ is an isobaric sum of unitary cuspidal automorphic representation over a CM field, Grobner \cite{Grobner-Monatsh}, Grobner-Lin \cite{Grobner-Lin-17}, and Grobner-Sachdeva \cite{Grobner-Sachdeva-20} have obtained general results.
	In contrast to these results, we have treated the case with the base field $\mathbb{Q}$ and non-unitary representation $\Sigma$.\\
	
	\noindent
	Experts will recognize that Theorem \ref{thm:main-theorem} is along the same lines as \cite[Theorem B]{Mahnkopf-05}. However, the theorem in \textit{loc. cit.} also proves the Galois equivariance of the ratio of $L$-values.
	The reason that we have proved a weaker statement is due to the softer methods employed in this article. Specifically, we have avoided explicit computations with Whittaker vectors at the ramified places, and instead relied on the computations of Mahnkopf \cite{Mahnkopf-05} to prove the abstract statements.
	These statements appear here as Lemma \ref{lemma:local-Whittaker-choices-ramified} and Lemma \ref{lemma:intertwiner-rational-whittaker-model} and have been essentially  accomplished in the works of \cite{Mahnkopf-05, Grobner-Harris}. It is perhaps natural to expect that a finer analysis should yield a more precise version of Theorem \ref{thm:rationality-ratio-L-functions-GL(n)}.\\

	\noindent
	We will defer the generalization of our results to general number fields and the refinement discussed in the preceding paragraphs to a later time and provide a few comments on the proof.
	Given a compact open subgroup $K_{f} \subset G_n(\mathbb{A}_f)$ and a maximal compact subgroup $K_{n, \infty} \subset G_n(\mathbb{R})$, define the associated locally symmetric space
	\[
	S_n(K_f) \colonequals G_n(\mathbb{Q}) \backslash G_n(\mathbb{A})/K_f \times K^0_{n, \infty}Z^0_n(\mathbb{R}),
	\] 
	where $Z^0_n(\mathbb{R})$ is the connected component of the center of $\mathrm{GL}(n, \mathbb{R}).$
	Consider the vector space
	\[
	\mathrm{H}^{\bullet}(S_n, E_{\mu}) \colonequals \varinjlim {}_{K_f} \mathrm{H}^{\bullet}(S_n(K_f), E_{\mu}).
	\]
	The complex vector space is defined over $\overline{\mathbb{Q}}$ and admits a $G_n(\mathbb{A}_f) \times \pi_0(G_{n-1}(\mathbb{R}))$-action.
	Moreover, according to \cite[\S 4.4]{Mahnkopf-05} it is possible to put a $\overline{\mathbb{Q}}$-structure on $\mathcal{W}(\Pi_f)$\footnote{The nondegenrate character $\psi$ has been suppressed from the notation for convenience} and $\mathrm{Ind}(\mathcal{W}(\pi) \times \chi)$.
	We have the following $G_n(\mathbb{A}_f)$ and $G_{n-1}(\mathbb{A}_f)$-equivariant maps
	\begin{equation} \label{eqn:whittaker-coho-intertwiner}
		\begin{tikzcd}[row sep = tiny]
			\mathcal{W}(\Pi_f) \arrow[r, hook] & \mathrm{H}^{\lfloor n^2/4 \rfloor}_c(S_n, E_{\mu})  \text{ and }  \mathrm{Ind}(\mathcal{W}(\pi) \times \chi) \arrow[r, hook] & \mathrm{H}^{\lfloor (n-1)^2/4 \rfloor}(S_n, E_{\lambda}),
		\end{tikzcd}
	\end{equation}
	respectively. Comparison of the $\overline{\mathbb{Q}}$-structures on the domain and the source of the two maps in \eqref{eqn:whittaker-coho-intertwiner} via the above intertwining maps defines certain nonzero complex numbers $p(\Pi_f)$ and $\tilde{p}(\Sigma_f)$\footnote{Although the notation suggests that $p(\Sigma_f)$ depends only on the finite part, it also depends on
		the choice of a nonzero class $[\Sigma_{\infty}]^{\epsilon} \in \mathrm{H}^{b_{n-1}}(\mathfrak{g}_{n-1}, K^0_{n-1, \infty}; \Sigma_{\infty} \otimes E_{\lambda})$. But the choice once made is fixed throughout this article, so this notation should not cause any confusion. The same remark applies to $\tilde{p}(\Pi_f)$.
		Li-Liu-Sun \cite{Li-Liu-Sun} have constructed a canonical choice of cohomology class $[\Pi_{\infty}]$. It may be plausible to fix a canonical choice of $[\Sigma_{\infty}]$ via Kostant's theorem and using results of \cite{Li-Liu-Sun}.}, respectively.
	This is by now standard for cuspidal automorphic representations. For induced representations this is discussed in \S \ref{subsec:rat-Whittaker-Betti-periods}. Also, see \cite{Grobner-Monatsh, Grobner-Harris} for related results.\\
	
	\noindent
	As discussed in the paragraph preceding Theorem \ref{thm:main-theorem}, the proof hinges on the fact that $L(\frac{1}{2}, \Pi \times \Sigma)$ admits a topological interpretation, that is, it can be written as the Poincar\'{e} pairing (see \S \ref{subsec:Poincare-pairing-Whittaker-vector}) of a specific cohomology class in $\mathrm{H}^{\bullet}(\widetilde{S}_{n-1}, \mathbb{C})$ defined over $\overline{\mathbb{Q}}$.
	This interpretation is a consequence of the fact that the Rankin--Selberg $L$-function admits an integral representation and that a certain numerical coincidence holds.
	This is explained in detail in \cite[\S 2.5.1]{Raghuram-16} and \cite[\S 2.4.4]{Raghuram-16}, respectively, and also summarized in \S \ref{subsec:Poincare-pairing-Whittaker-vector} here.
	As far as the choice of the cohomology classes are concerned, this is achieved by choosing vectors in each automorphic representations $\Pi, \Sigma$ and using the embedding of the automorphic representation in the cohomology of $S_{n-1}$ (see \eqref{eqn:whittaker-coho-intertwiner}) to finally complete this task.
	The \textit{correct} choice of Whittaker vectors at the ramified places needs care. In the present article, we have made an attempt to formalize and prove an abstract lemma (see Lemma \ref{lemma:local-Whittaker-choices-ramified}) pertaining to the choices of Whittaker vectors at ramified places for $G_n \times G_{n-1}$ Rankin--Selberg $L$-functions.
	This has been essentially accomplished in \cite{Grobner-Harris}.\\
	
	\noindent
	For many purposes, for instance for the algebraicity results on $L$-values (up to an algebraic number), the equality of $L$-function and its integral representation can be relaxed at a few finite places to an equality up to a nonzero algebraic number.
	This approach has been used in the present work as well as in several works cited in the previous paragraphs. See \S \ref{subsec:global-whittaker-global-integral} and the paragraph preceding to it for more details.\\
	
	\noindent
	We remark here that our method is closest to that of Mahnkopf \cite{Mahnkopf-98, Mahnkopf-05}. It is noteworthy that the representation $\Sigma$ is non-unitary in the work of Mahnkopf as well as in the present work which is in contrast with almost all the other works mentioned above.
	As the highest weight of the trivial representation is not regular, our work is not subsumed in the existing body of the literature.\\
	
	\noindent
	In a recent work Chen \cite{Chen-23} has considered the case of non-unitary representations $\Sigma$ induced from parabolic of the form $(n_1,n_2, \dots, n_r)$, where at most one of the $n_i$'s are odd.
	Under certain assumptions on $\Sigma$, Chen proves \cite[Theorem 4.2]{Chen-23} the Galois equivariance of ratio $\frac{L(\frac{1}{2} +m, \Pi \times \Sigma)}{p(\Pi)p(\Sigma)p(\Pi_{\infty}, \Sigma_{\infty})}$, where $\frac{1}{2}+m$ is a critical point, and $p(\Pi), p(\Sigma)$ are periods arising from comparison of rational structures. As a corollary, Chen obtains Deligne's conjecture for symmetric power $L$-functions of modular forms.
	Because of the assumption on $n_i$, the results of the current article does not follow from the results of \cite{Chen-23}.\\

	\noindent
	We end this introduction with a brief history :
	\begin{itemize}
		\item $n = 2$. When $F$ is totally real, which is the case of the classical Hilbert modular form, the rationality result was proved by Shimura \cite[Theorem 4.3]{Shimura-78} and Harder \cite{Harder-83}. For a general number field $F$, it is due to Hida \cite[Theorem I(i)]{Hida-94}.
		
		\item $n = 3$. When $F = \mathbb{Q}$, the result is proved by Mahnkopf in \cite{Mahnkopf-98} by taking $\Sigma$ to be a representation induced from two non-unitary characters. Finally, Mahnkopf's result is generalized by Raghuram-Sachdeva \cite{Raghuram-Sachdeva-17} for any totally real field and by Sachdeva \cite{Sachdeva-20} for any CM field.
		
		\item For a general $n$ and $F = \mathbb{Q}$ : A result stronger than Theorem \ref{thm:rationality-ratio-L-functions-GL(n)} is proved in \cite{Mahnkopf-05} when $\Pi$ is cohomological with respect to a regular representation. It is further remarked that the case when $n$ is even does not require the regularity hypothesis.
	\end{itemize}

	\subsection{Organization of the article}
	The article is organized as follows:
	Section 2 gives a short introduction to the various notions used in this article.
	Section 3.1 contains the necessary results on cohomological representations, in particular, it provides a proof of the fact that $\Sigma_{\infty}$, a reducible representation of  $G_{n-1}(\mathbb{R})$, is cohomological.
	Section 3.2 discusses results on global intertwining operators. These results are used to prove that a certain Eisenstein series is well-defined at a given point, yielding an intertwining map from the representation $\Sigma_f$ to the cohomology of the locally symmetric space associated with $G_{n-1}$.
	Section 4 proves the aforementioned result on Eisenstein series and obtains the Galois equivariance of the map $\mathrm{Eis^{coh}}$.
	Section 5 pins down the choices of Whittaker vectors $w_{\Pi}$ and $w_{\Sigma}$ for $\Pi$  and $\Sigma$ respectively, such that the global Rankin--Selberg integral $\Psi(\frac{1}{2}, w_{\Pi}, w_{\Sigma})$(see \S \ref{subsec:global-whittaker-global-integral}) is the Rankin--Selberg $L$-function up to $\overline{\mathbb{Q}}^{\times}$.
	Section 6 completes the proof of Theorem \ref{thm:main-theorem}.
	Section 7 completes the proof of Theorem \ref{thm:rationality-ratio-L-functions-GL(n)}.

	\subsection{Acknowledgements}
	The present work started as a reading project of the papers of Mahnkopf \cite{Mahnkopf-98, Mahnkopf-05} by the two authors with Dipendra Prasad whom we would like to thank for the discussions and constant encouragement. 
	We thank Laurent Clozel for directing us to \cite[\S 4.6]{Moeglin-00}, Nadir Matringe for confirming and outlining a proof of a version of Lemma 5.1, and Arvind Nair and Ravi Raghunathan for several helpful discussions.
	We thank anonymous referees for their valuable comments to improve the article, and encouragement to generalize the scope of our results. The second named author is partly supported by DST-SERB POWER Grant No. SPG/2022/001738.

	\section{Preliminaries}

	\subsection{Automorphic representations}
	Let $\mathcal{A}(G_n(\mathbb{Q})\backslash G_n(\mathbb{A}))_{\eta}$ denote the space of automorphic forms on $G_n$ which transform as a character $\eta : Z_n(\mathbb{A}) \rightarrow \mathbb{C}^{\times}$. This space comes equipped with the right regular action of $G_n(\mathbb{A})$.
	To an automorphic form $\phi$ on $G_n$ and any parabolic subgroup $M_PU_P = P_{n-1} \subset G_n$, we may associate a function $\phi^{U_P} \in C^{\infty}(G_n(\mathbb{Q})U_P(\mathbb{A})\backslash G_n(\mathbb{A}))$ defined as follows
	\[
	\phi^{U_P}(g) \colonequals \int_{U_P(\mathbb{A})} \phi(ug) du.
	\]
	It is known that the constant term determines many aspects of the theory of automorphic forms. An automorphic form $\phi \in \mathcal{A}(G_n(\mathbb{Q})\backslash G_n(\mathbb{A}))_{\eta}$ is said to be a cusp form if and only if $\lbrace \phi^{U_P} \equiv 0 \suchthat P_{n-1} \subset G_n$ is a parabolic subgroup$\rbrace$.
	The subspace of all cusp forms is denoted by $\mathcal{A}_{cusp}(G_n(\mathbb{Q})\backslash G_n(\mathbb{A}))_{\eta}$ and can be easily seen to be preserved under the action of $G_n(\mathbb{A})$. In fact this representation decomposes with finite multiplicity (with multiplicity one for $G_n$) and each of the irreducible constituent is called a cuspidal automorphic representation.\\
	
	\noindent
	The subspace $\mathcal{A}_{cusp}(G_n(\mathbb{Q})\backslash G_n(\mathbb{A}))_{\eta} \subset \mathcal{A}(G_n(\mathbb{Q})\backslash G_n(\mathbb{A}))_{\eta}$ admits a direct summand and this summand is denoted by $\mathcal{A}_{Eis}(G_n(\mathbb{Q})\backslash G_n(\mathbb{A}))_{\eta}$.
	The subscript ``Eis" indicates that this subspace is generated by cuspidal Eisenstein series and its residues -- a nontrivial fact due to Selberg in the case of general linear groups and Waldspurger for general reductive groups. This subspace has been extensively studied in various contexts, in particular, this work crucially depends on the results of Franke \cite{Franke, FS} and their refinement in \cite{Grobner-residual}.
	We now mention a key construction which forms the building blocks of $\mathcal{A}_{Eis}(G_n(\mathbb{Q})\backslash G_n(\mathbb{A}))_{\eta}$.
	For a parabolic subgroup $U_PM_P = P_{n-1} \subset G_n$, denote by $\mathrm{Ind}^{G_n(\mathbb{A})}_{P_{n-1}(\mathbb{A})}$ the normalized induction functor. For $\sigma$ a cusp automorphic representation of the Levi $M_P(\mathbb{A})$ and $\chi$ a character of the center of the Levi,
	the induced representations $\mathrm{Ind}^{G_n(\mathbb{A})}_{P_{n-1}(\mathbb{A})}(\sigma \otimes \chi^s)$ for $s \in \mathbb{C}$ admits a map to the space of automorphic forms $\mathcal{A}\left(G_n(\mathbb{Q})\backslash G_n(\mathbb{A})\right)$ via Eisenstein series. More precisely the realization of induced representations as functions is a two step process as below
	\[
	\mathrm{Ind}^{G_n(\mathbb{A})}_{P_{n-1}(\mathbb{A})}(\sigma \otimes \chi^s) \hookrightarrow C^{\infty}(P_{n-1}(\mathbb{Q})U_P(\mathbb{A})\backslash G_n(\mathbb{A})) \xrightarrow{\mathrm{Eis}} \mathcal{A}(G_n(\mathbb{Q}) \backslash G_n(\mathbb{A})).
	\]
	It is known the Eisenstein series converges when $s$ lies in a certain half plane, and can be continued meromorphically to all $s \in \mathbb{C}$, thus the above map makes sense for any $s \in \mathbb{C}$ (taking residues if needed which we will not need to consider). We refer the reader to \cite{Borel} for more on automorphic forms and automorphic representations.

	\subsection{Locally symmetric spaces and their cohomology} \label{subsec:locally-symmetric-spaces-and-cohomology}
	For any compact open subgroup $K_f \subset G_n(\mathbb{A}_f)$ and a maximal connected  compact subgroup $K^0_{n, \infty}$ of $G_n(\mathbb{R})$, define the associated locally symmetric spaces to be
	\[
	\tilde{S}_n(K_f) \colonequals G_n(\mathbb{Q}) \backslash G_n(\mathbb{A})/ (K_f K_{n,\infty}^0), \quad S_n(K_f) \colonequals G_n(\mathbb{Q}) \backslash G_n(\mathbb{A})/ (K_f K_{n,\infty}^0 Z_n(\mathbb{R})^0),
	\]
	and call the quotient map $p : \tilde{S}_n(K_f) \rightarrow S_n(K_f)$. There is a $\pi_0(G_n(\mathbb{R})) \times G_n(\mathbb{A}_f)$-module
	\[
	H^\bullet(S_n, E) \colonequals \varinjlim {}_{K_f} \mathrm{H}^{\bullet}(S_n(K_f), E),
	\]
	and the cohomology of $S_n(K_f)$ can be recovered by taking invariants under the action of the group $K_f$. Recall the well-known $G_n(\mathbb{A}_f) \times \pi_0(G_n(\mathbb{R}))$-equivariant isomorphism known as Borel's conjecture and now a theorem due to Franke \cite{Franke}
	\[
	\mathrm{H}^{\bullet}(S_n, E) \simeq \mathrm{H}^{\bullet}(\mathfrak{g}_n, K^{0}_{n,\infty}Z_n(\mathbb{R})^0; \mathcal{A}(G_n(\mathbb{Q})\backslash G_n(\mathbb{A})) \otimes E)(\chi_E).
	\]
	Let $\iota : G_{n-1} \rightarrow G_n$ be the map $g \mapsto \begin{pmatrix} g& \\ &1 \end{pmatrix}$. Then $\iota$ induces a map on locally symmetric spaces of respective groups, which will also be denoted by $\iota$.

	\subsubsection{Cuspidal cohomology}
	The inclusion $\mathcal{A}_{{cusp}}(G_n(\mathbb{Q})\backslash G_n(\mathbb{A})) \hookrightarrow \mathcal{A}(G_n(\mathbb{Q})\backslash G_n(\mathbb{A}))$ of the space of cusp forms in the space of automorphic functions induces, via results of Borel and Franke, an injection in $(\mathfrak{g}_n, K_{n, \infty})$-cohomology. Cuspidal cohomology is defined to be the image
	\[
	\mathrm{H}^{\bullet}_{cusp}(S_n(K_f), E) \colonequals \im\left( \mathrm{H}^{\bullet}(\mathfrak{g}_n, K^{0}_{n,\infty}Z_n(\mathbb{R})^0; \mathcal{A}_{cusp}(G_n(\mathbb{Q})\backslash G_n(\mathbb{A}))^{K_f} \otimes E) \hookrightarrow \mathrm{H}^{\bullet}(S_n(K_f), E) \right).
	\]
	Using the decomposition of the space of cusp forms into a direct sum of cuspidal automorphic representations, we get the following fundamental decomposition of $\mathrm{H}^{\bullet}_{cusp}(S_n, E)$ as $\pi_0(G_n(\mathbb{R})) \times G_n(\mathbb{A}_{f})$-modules
	\[
	\mathrm{H}_{cusp}^{\bullet}(S_n, E) = \bigoplus_{\substack{\Pi \subset \mathcal{A}_{cusp}(G_n(\mathbb{Q})\backslash G_n(\mathbb{A}))}_{\chi_E}} \Pi_f \otimes \mathrm{H}^{\bullet}(\mathfrak{g}_n, K^{0}_{n,\infty}Z_n(\mathbb{R})^0; \Pi_\infty \otimes E).
	\]

	\subsubsection{Eisenstein cohomology}
	The inclusion $\mathcal{A}_{Eis}(G_n(\mathbb{Q})\backslash G_n(\mathbb{A}))_{\chi_E} \hookrightarrow \mathcal{A}(G_n(\mathbb{Q})\backslash G_n(\mathbb{A}))_{\chi_E}$ induces a map in $(\mathfrak{g}_n, K^0_{n, \infty})$-cohomology. Define
	\[
	\mathrm{H}^{\bullet}_{Eis}(S_n(K_f), E) \colonequals \mathrm{im}\left( \mathrm{H}^{\bullet}(\mathfrak{g}_n, K_{n, \infty}Z_n(\mathbb{R})^0; \mathcal{A}_{Eis}(G_n(\mathbb{Q})\backslash G_n(\mathbb{A}))^{K_f} \otimes E)_{\chi_E} \hookrightarrow \mathrm{H}^{\bullet}(S_n(K_f), E) \right).
	\]
	This subspace of cohomology is called the Eisenstein cohomology.
	The decomposition of the space $\mathcal{A}_{Eis}(G_n(\mathbb{Q})\backslash G_n(\mathbb{A}))_{\chi_E}$ according to their cuspidal support allows us to understand the Eisenstein cohomology in terms of simpler objects (See \cite{Franke, FS}).
	Unlike cuspidal cohomology, the classes in Eisenstein cohomology often cannot be represented by forms with compact support.

	\subsubsection{Orientation class and the map $\int$} \label{subsubsec:orientation-class-int}
	Recall that
	\[
	\widetilde{S}_n(K_f) = \underset{x \in \widehat{\mathbb{Z}}^{\times}/\det(K_f)}{\bigsqcup} \Gamma_x\backslash G_n(\mathbb{R})/K^0_{n, \infty},
	\]
	where $\Gamma_x \colonequals \tilde{x}^{-1}K_f\tilde{x} \cap G_n(\mathbb{R})$ for any lift $\tilde{x}$ of $x$ to $G_n(\mathbb{A}_f)$. There is an action of $\pi_0(K_{n, \infty})$ on the set of connected components of $\widetilde{S}_n(K_f)$ given by sending $x$ to $x \cdot h_{\infty}$, where $h_{\infty} \in \pi_0(K_{n, \infty})$.
	Choose a non-vanishing section of the line bundle $\det\left( T^*(G_n(\mathbb{R})/K^0_{n, \infty})\right)$, which is tantamount to a choice of an orientation of the manifold $G_n(\mathbb{R})/K^0_{n, \infty}$.
	This fixes a choice of orientation on each connected component of $\widetilde{S}_n(K_f)$ and we denote the collection of orientations by $[\mathrm{or}_{x}]$.
	The behaviour of these orientation classes under the action of $\pi_0(K_{n, \infty})$ is described as in \cite[\S 5.1.3, Lemma]{Mahnkopf-05}. From here a quick computation says that the action is via $\mathrm{sgn}(h_{\infty})$ when $n$ is even.\\
	
	\noindent
	Once the choices of the orientations is fixed, define, for $\alpha = (\alpha_x)_{x \in \widehat{\mathbb{Z}}^{\times}/\det(K_f)}$, the map $\int \alpha \colonequals \mathrm{vol}(K_f)^{-1} \sum_{x \in \widehat{\mathbb{Z}}^{\times}/\det(K_f)} \left(\alpha_x \cap [\mathrm{or}_x] \right)$.
	This map is compatible with pullback maps induced by the morphism $\widetilde{S}_n(K_f) \rightarrow \widetilde{S}_n(K'_f)$ whenever $K'_f \subset K_f$. Thus the map $\int : \mathrm{H}^{\dim S_n}_c(\widetilde{S}_n, \mathbb{C}) \rightarrow \mathbb{C}$ is well defined and is moreover defined over $\mathbb{Q}$.

	\subsection{Cohomological representations} \label{subsec:coh-rep-recall}
	We recall some notations from the introduction. Let $T_n \subset B_n \subset G_n$ respectively be the maximal torus given by the diagonal matrices and the Borel subgroup given by upper triangular matrices. Recall that we have the following isomorphism $X^*(T_n) \cong \mathbb{Z}^n$ which sends $(a_1, \dots, a_n)$ to $\big(\diag (t_1, \dots, t_n) \mapsto t^{a_1}_{1}\cdots t^{a_n}_n \big)$.
	Let $\mu = (\mu_1, \dots, \mu_{\lfloor n/2 \rfloor}, 0, -\mu_{\lfloor n/2 \rfloor}, \dots, -\mu_1)$ be a character of $T_n$ and let $E_{\mu}$ be the finite dimensional representation of $G_n$ with highest weight $\mu$.
	Define $\lambda = (\mu_1, \dots, \mu_{\lfloor n/2 \rfloor}, -\mu_{\lfloor n/2 \rfloor}, \dots, -\mu_1)$ a character of $T_{n-1}$ and let $E_{\lambda}$ be the finite dimensional representation of $G_{n-1}$ with highest weight $\lambda$.
	The classical branching laws \cite[Ch. 8, Theorem 8.1.1]{Goodman-Wallach} tell us that there exists a unique (up to scalars) nonzero $G_{n-1}$-equivariant intertwining map
	\[
	E_{\mu}|_{G_{n-1}} \otimes E_{\lambda} \rightarrow \mathbb{C},
	\]
	which is furthermore defined over $\mathbb{Q}$. In the above displayed equation the subgroup $G_{n-1}$ is embedded in $G_n$ via the map $\iota$.\\
	
	\noindent
	Let $\tau$ denote an infinite dimensional irreducible representation of the group $G_n(\mathbb{R})$. Then $\tau$ is said to be cohomological if $\mathrm{H}^{\bullet}(\mathfrak{g}_n, K^0_{n, \infty}, \tau \otimes E) \neq 0$ for some $\bullet \in \mathbb{N}$ and a finite dimensional representation $E$.
	Let $D_l$ denote the discrete series representation of $G_2(\mathbb{R})$ with lowest weight $l+1$. Then it follows from \cite{Clozel-motifs} and summarized in \cite[\S 3]{Mahnkopf-05} that a representation $\tau$ of $G_{n}(\mathbb{R})$ is cohomological with respect to $E_{\mu}$ if and only if
	\[
	\epsilon(\tau) \otimes \tau \cong \mathrm{Ind}^{\mathrm{GL}(n, \mathbb{R})}_{P_{(2, \dots, 2, 1)}(\mathbb{R})} \left( D_{2\mu_1+n-1} \otimes \dots \otimes D_{2\mu_{\lfloor n/2 \rfloor} + 2} \otimes \mathrm{Id} \right),
	\]
	where $\epsilon(\tau)$ is either $\mathrm{sgn}$ or trivial representation of $\pi_0(G_{n}(\mathbb{R}))$. Similarly a representation $\tau$ of $G_{n-1}(\mathbb{R})$ is cohomological with respect to $E_{\lambda}$ if and only if
	\[
	\tau \cong \mathrm{Ind}^{\mathrm{GL}(n-1, \mathbb{R})}_{P_{(2, \dots, 2)}(\mathbb{R})}\left( D_{2\mu_1+n-2} \otimes \dots \otimes D_{2\mu_i + n-2i} \otimes \dots \otimes D_{2\mu_{\lfloor n/2 \rfloor} + 1}\right).
	\]

	\subsection{Critical set for certain Rankin--Selberg $L$-functions} \label{subsec:critical-sets-L-function}
	For integers $n,m\geq 1$, let $\pi_1$ (resp. $\pi_2$) be an irreducible admissible representation of $G_n(\mathbb{A})$ (resp. $G_m(\mathbb{A})$). Let $L(s,\pi_1 \times \pi_2)= \prod_{v < \infty} L(s,\pi_{1v} \times \pi_{2v})$ be the Rankin--Selberg $L$-function associated to the pair $(\pi_1, \pi_2)$.
	Following \cite[Prop.-Def.2.3]{Deligne-L-values}, call an integer or a half integer $s_0\in \frac{n-m}{2}+\mathbb{Z}$ to be critical for $L(s,\pi_1 \times \pi_2)$ if both $L(s,\pi_{1\infty} \times \pi_{2\infty})$ and $L(1-s,\pi^\vee_{1\infty} \times \pi^\vee_{2\infty})$ are regular at $s = s_0$.\\

	\noindent
	We have the following equality of $L$-functions
	\[
	L(s, \Pi \times \Sigma) = L(s + \frac{2\mu_{\lfloor n/2 \rfloor}+1}{2}, \Pi \times \chi_{n-2})\cdot L(s - \frac{2\mu_{\lfloor n/2 \rfloor}+1}{2}, \Pi \times \chi_{n-1}) \cdot \prod^{(n-3)/2}_{i=1}L(s, \Pi \times \pi_i).\]
	The critical points for $L$-functions on $G_n$ have been computed in \cite[\S 3.1.5]{Mahnkopf-05}. Using this we get that $s = \frac{1}{2} + m$ is critical for $L(s + \frac{2\mu_{\lfloor n/2 \rfloor}+1}{2}, \Pi \times \chi_{n-2})$ and $L(s - \frac{2\mu_{\lfloor n/2 \rfloor}+1}{2}, \Pi \times \chi_{n-1})$ if and only if $m = 0$, where $\chi_{n-2}, \chi_{n-1}$ are as in the introduction.
	Following the recipe as explained in the proof of \cite[Lemma 2.24]{Raghuram-16}, it can be checked that $s = \frac{1}{2}$ is a critical point for the $L$-function $L(s, \Pi \times \pi_i)$ for any $i=1,2,\dots, \frac{n-3}{2}$.
	Hence $s=\frac{1}{2}$ is the unique critical point for the $L$-function $L(s, \Pi \times \Sigma)$.

	\section{Some results in representation theory} \label{sec:result-in-rep-theory}
	Let $\Pi$ be a cuspidal automorphic representation of $G_n(\mathbb{A})$ which is cohomological with respect to the coefficient system $E_{\mu}$, and $\pi$ a cuspidal automorphic representation of the Levi subgroup of $P_{n-1}(\mathbb{A})$ such that $\pi_{\infty} \cong D_{2\mu_1+n-2} \otimes \dots \otimes D_{2\mu_i + n-2i} \otimes \dots \otimes D_{2\mu_{\lfloor n/2 \rfloor -1} + 3}$.
	Consider two distinct finite-order algebraic Hecke characters $\chi_{n-2}$ and $\chi_{n-1}$ such that $\chi_{i\infty} = \epsilon(\Pi_{\infty})\mathrm{sgn}$ for $i=n-2,n-1$, and define
	\begin{equation} \label{eqn:def-Sigma}
		\Sigma \colonequals \Sigma(\pi, \chi_{n-2}, \chi_{n-1}) \colonequals \mathrm{Ind}^{G_{n-1}(\mathbb{A})}_{P_{n-1}(\mathbb{A})}\left(\pi \otimes \chi_{n-2}|\cdot|^{\frac{2\mu_{\lfloor n/2 \rfloor}+1}{2}} \otimes \chi_{n-1}|\cdot|^{-\frac{2\mu_{\lfloor n/2 \rfloor}+1}{2}} \right),
	\end{equation}
	an induced representation of $G_{n-1}(\mathbb{A})$.
	To keep the notation simple, we will simply write $\Sigma$ and ignore the symbols in the parenthesis when there is no possibility of confusion.
	Recall that we denote the character $\chi_{n-2}|\cdot|^{\frac{2\mu_{\lfloor n/2 \rfloor}+1}{2}} \otimes \chi_{n-1}|\cdot|^{-\frac{2\mu_{\lfloor n/2 \rfloor}+1}{2}}$ by $\chi$.

	\subsection{Representation theory: Local} \label{subsec:Representation theory:Local}
	\subsubsection{$(\mathfrak{g}, K_{\infty})$-cohomology of $\Pi$ and $\Sigma$} \label{subsubsec:coho-real}
	Define
	\begin{equation} \label{eqn:cohomological-part}
		\begin{split}
			\tau_{n-1} &\colonequals \mathrm{Ind}^{\mathrm{GL}(n-1, \mathbb{R})}_{P_{(2, \dots, 2)}(\mathbb{R})} \left( D_{2\mu_1+n-2} \otimes \dots \otimes D_{2\mu_i + n-2i} \otimes \dots \otimes D_{2\mu_{\lfloor n/2 \rfloor} + 1}\right),\\
			\tau_n &\colonequals \Pi_{\infty}.
		\end{split}
	\end{equation}
	The inclusion $D_{2\mu_{\lfloor n/2 \rfloor}+1} \subset \mathrm{Ind}^{G_2(\mathbb{R})}_{B_2(\mathbb{R})}\chi_{\infty}$ induces an inclusion
	\begin{equation} \label{eqn:cohomological-part-inclusion-map}
		\tau_{n-1}\subset \Sigma_{\infty}.
	\end{equation}

	\begin{prop} \label{prop-coho}
		Let $\Pi_{\infty}$ and $\Sigma_{\infty}$ be as above and $b_n = \lfloor n^2/4 \rfloor$. Then,
		\begin{enumerate}
			\item $\mathrm{H}^{b_n}(\mathfrak{g}_{n}, K^0_{n, \infty}Z_n(\mathbb{R})^0; \Pi_{\infty} \otimes E_{\mu})$ is one dimensional and $\pi_0(K_{n, \infty})$ acts via $\epsilon(\Pi_{\infty})$.
			
			\item The map induced in Lie algebra cohomology by the inclusion in \eqref{eqn:cohomological-part-inclusion-map} is a $\pi_0(K_{n-1, \infty})$-equivariant isomorphism in degree $b_{n-1}$. Consequently, we have the following $\pi_0(K_{n-1, \infty})$-equivariant isomorphism
			\[
			\mathrm{H}^{b_{n-1}}(\mathfrak{g}_{n-1}, K^0_{n-1, \infty} Z_{n-1}(\mathbb{R})^0 ; \Sigma_{\infty} \otimes E_{\lambda}) \cong 1 \oplus \mathrm{sgn}
			\]
		\end{enumerate}
	\end{prop}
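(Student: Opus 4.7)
The plan for Part (1) is to apply the classification of cohomological representations of $\mathrm{GL}(n,\mathbb{R})$ from \S 2.3 together with the Borel--Wallach / Vogan--Zuckerman formula for $(\mathfrak{g},K)$-cohomology: since $\Pi_\infty$ is cohomological with respect to $E_\mu$, a direct computation via Delorme's formula paired with Kostant's theorem for $H^\bullet(\mathfrak{n}_P, E_\mu)$ yields the claimed one-dimensionality in the bottom degree $b_n$ (the $Z_n(\mathbb{R})^0$-quotient combined with $n$ being odd collapses what would otherwise be a two-dimensional answer to dimension one), while tracing the $\pi_0(K_{n,\infty})$-action through the induced-series data gives $\epsilon(\Pi_\infty) = \mathrm{sgn}^{(n-1)/2}$.

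For Part (2), I would first use induction in stages. Since $P_{n-1} \subset P_{(2,\ldots,2)}$ (differing only by decomposing the last $\mathrm{GL}(2)$-block into $\mathbb{G}_m \times \mathbb{G}_m$), one obtains $\Sigma_\infty = \mathrm{Ind}_{P_{(2,\ldots,2)}(\mathbb{R})}^{G_{n-1}(\mathbb{R})}(\pi_\infty \otimes I(\chi_\infty))$ with $I(\chi_\infty) \colonequals \mathrm{Ind}_{B_2(\mathbb{R})}^{G_2(\mathbb{R})}\chi_\infty$. The short exact sequence of $\mathrm{GL}(2,\mathbb{R})$-modules $0 \to D_{2\mu_{\lfloor n/2\rfloor}+1} \to I(\chi_\infty) \to F \to 0$ (with $F$ a finite-dimensional irreducible quotient) induces, by exactness of parabolic induction,
\[
0 \to \tau_{n-1} \to \Sigma_\infty \to Q \to 0, \qquad Q \colonequals \mathrm{Ind}_{P_{(2,\ldots,2)}(\mathbb{R})}^{G_{n-1}(\mathbb{R})}(\pi_\infty \otimes F).
\]
The proposition then reduces, via the long exact sequence in $(\mathfrak{g}_{n-1}, K_{n-1,\infty}^0 Z_{n-1}(\mathbb{R})^0)$-cohomology, to the vanishing $\mathrm{H}^\bullet(Q \otimes E_\lambda) = 0$ in degrees $b_{n-1}-1$ and $b_{n-1}$ combined with the explicit identification $\mathrm{H}^{b_{n-1}}(\tau_{n-1} \otimes E_\lambda) \cong 1 \oplus \mathrm{sgn}$.

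For the vanishing, the key observation is that no irreducible composition factor of $Q$ can be cohomological with respect to $E_\lambda$: by the classification in \S 2.3, any cohomological representation of $G_{n-1}(\mathbb{R})$ must be of the form $\mathrm{Ind}_{P_{(2,\ldots,2)}(\mathbb{R})}^{G_{n-1}(\mathbb{R})}(\otimes D_{l_i})$, but the composition factors of $\mathrm{Ind}(\pi_\infty \otimes F)$ contain a finite-dimensional constituent in the last slot which prevents such a description. Hence $\mathrm{H}^\bullet(Q\otimes E_\lambda) = 0$, and the long exact sequence delivers the required $\pi_0$-equivariant isomorphism $\mathrm{H}^{b_{n-1}}(\tau_{n-1}\otimes E_\lambda) \xrightarrow{\sim} \mathrm{H}^{b_{n-1}}(\Sigma_\infty\otimes E_\lambda)$. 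The identification of the left-hand side with $1 \oplus \mathrm{sgn}$ then follows by applying the same Borel--Wallach / Vogan--Zuckerman formula to the cohomological $\tau_{n-1}$, this time with $n-1$ even, so the analog of the dimension collapse from Part (1) now yields two dimensions and the $\pi_0(K_{n-1,\infty})$-action $1 \oplus \mathrm{sgn}$ rather than a single sign. The main obstacle in this plan is justifying rigorously that the composition factors of $Q$ all fail the cohomological criterion --- in particular, ruling out subtle mixing between $\pi_\infty$ and $F$ in the composition structure of $\mathrm{Ind}(\pi_\infty \otimes F)$ that could accidentally produce a cohomological composition factor.
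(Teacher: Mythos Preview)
Your approach is essentially the same as the paper's. For Part~(1) the paper simply cites Clozel \cite[Lemma~3.14]{Clozel-motifs}, which is the Delorme/Kostant computation you sketch. For Part~(2) the paper uses exactly your strategy: the short exact sequence on $\mathrm{GL}(2,\mathbb{R})$, exactness of parabolic induction, and the long exact sequence in $(\mathfrak{g},K)$-cohomology.

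The obstacle you flag is dispatched in the paper by identifying the $\mathrm{GL}(2,\mathbb{R})$-quotient $F$ concretely as the one-dimensional character $\mathbb{C}_\varepsilon = \epsilon(\Pi_\infty)\mathrm{sgn}$, so that $Q = \mathrm{Ind}(\pi_\infty \otimes \mathbb{C}_\varepsilon)$ is \emph{irreducible}; by the classification of cohomological representations recalled in \S\ref{subsec:coh-rep-recall} it is then simply not on the list, giving $\mathrm{H}^i(\mathfrak{g}_{n-1},K^0_{n-1,\infty};Q\otimes E_\lambda)=0$ for \emph{all} $i$. There is thus no composition-series analysis to perform and no ``subtle mixing'' to rule out. (Even if one prefers your more cautious formulation with $F$ a general finite-dimensional quotient, the worry is still illusory: Delorme's isomorphism reduces the vanishing of $\mathrm{H}^\bullet(Q\otimes E_\lambda)$ to the $(\mathfrak{m}_P,K_{M_P}^0)$-cohomology of $\pi_\infty\otimes F$ against the Kostant pieces $E_{w\cdot\lambda}$, and the finite-dimensional last factor $F$ can never match the discrete-series requirement there.)
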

	
	\begin{proof}
		~
		\begin{enumerate}
			\item See \cite[Lemma 3.14]{Clozel-motifs} for a proof.
			\item Recall the following short exact sequence of $G_{2}(\mathbb{R})$-representations
			\[
			0 \rightarrow D_{2\mu_{\lfloor n/2 \rfloor}+1} \rightarrow \mathrm{Ind}^{\mathrm{GL}(2, \mathbb{R})}_{B_2(\mathbb{R})}(\chi_{\infty}) \rightarrow \mathbb{C}_{\varepsilon} \rightarrow 0,
			\]
			where $\mathbb{C}_{\varepsilon}$ denotes the one dimensional representation of $G_2(\mathbb{R})$ given by $\epsilon(\Pi_{\infty})\mathrm{sgn}$.
			Consider the above short exact sequence as representations of $G_{n-3}(\mathbb{R}) \times G_2(\mathbb{R})$ via projection to second component and tensor with $\pi_{\infty}$ considered as the representation of $G_{n-3}(\mathbb{R}) \times G_2(\mathbb{R})$ via projection onto the first component to get
			\[
			0 \rightarrow \pi_{\infty} \otimes D_{2\mu_{\lfloor n/2 \rfloor}+1} \rightarrow \pi_{\infty} \otimes \mathrm{Ind}^{\mathrm{GL}(2, \mathbb{R})}_{B_2(\mathbb{R})}(\chi_{\infty}) \rightarrow \pi_{\infty} \otimes \mathbb{C}_{\varepsilon} \rightarrow 0.
			\]
			Applying the induction functor $\mathrm{Ind}^{(\mathfrak{g}_{n-1}, K^0_{n-1, \infty})}_{(\mathfrak{p}_{(2,\dots, 2)}, K^0_{P_{(2, \dots, 2)},\infty})}$ (here $K^0_{P_{(2, \dots, 2)},\infty} = K^0_{n-1, \infty} \cap P_{(2, \dots, 2)}(\mathbb{R})$), we get an exact sequence of $(\mathfrak{g}_{n-1}, K^0_{n-1,\infty})$-modules. Using the long exact sequence in cohomology and noting that $\mathrm{H}^{i}(\mathfrak{g}_{n-1}, K^0_{n-1,\infty}; \mathrm{Ind}(\pi_{\infty} \otimes \mathbb{C}_{\varepsilon})) = 0$ for all $i$ and thus we get the result.
			The last assertion follows by observing that the induced representation $\mathrm{Ind}(\pi_{\infty} \otimes \mathbb{C}_{\varepsilon})$ is irreducible and we know from the classification of cohomological representations of $G_{n-1}(\mathbb{R})$ that it is not cohomological.
		\end{enumerate}
	\end{proof}

	\subsubsection{Non-vanishing of Rankin--Selberg integral at infinity} \label{subsubsec:critical-point}
	Since $\frac{1}{2}$ is a critical value for $L_{\infty}(s, \tau_n \times \tau_{n-1})$ \cite{Raghuram-16}, the Rankin--Selberg integral gives a well-defined $G_{n-1}(\mathbb{R})$-equivariant nonzero pairing
	\[
	\tau_{n} \;\widehat{\otimes}\; \tau_{n-1} \rightarrow \mathbb{C}.
	\]
	The notation $\widehat{\otimes}$ denotes the completed projective tensor product in the above equation. It follows from \cite[Theorem 1.2(ii)]{Cogdell-Shapiro} that the Rankin--Selberg integral $\Psi_{\infty}(\frac{1}{2}, \cdot, \cdot)$ is nonzero on $\mathcal{W}(\Pi_{\infty}, \psi) \times \mathcal{W}(\Sigma_{\infty}, \psi^{-1})$.
	Binyong Sun proves a more precise result by showing that the Rankin--Selberg integral is nonzero when restricted to the unique minimal $(K_{n,\infty} \times K_{n-1, \infty})$-subspace of $\tau_n \widehat{\otimes} \tau_{n-1}$.
	Together with the intertwining map $E_{\mu}|_{\mathrm{GL}(n-1)} \otimes E_{\lambda} \rightarrow \mathbb{C}$, this induces a $\pi_0(K_{n-1,\infty}) (\subset \pi_0(K_{n,\infty})\times \pi_0(K_{n-1,\infty}))$-equivariant pairing
	\begin{equation} \label{eqn:pairing-nonzero-infinity}
		\mathrm{H}^{b_n}(\mathfrak{g}_{n}, K_{n,\infty}^0; \Pi_{\infty} \otimes E_{\mu}) \otimes \mathrm{H}^{b_{n-1}}(\mathfrak{g}_{n-1}, K^0_{n-1, \infty}; \Sigma_{\infty} \otimes E_{\lambda}) \rightarrow \mathbb{C}_{\varepsilon},
	\end{equation}
	where $\mathbb{C}_{\varepsilon}$ denotes the nontrivial character of $\pi_0(K_{n-1, \infty})$. \cite[Theorem A]{Sun-nonvanishing} implies that the above pairing is nonzero.
	Let $[\Pi_{\infty}]$ be a nonzero class in $\mathrm{H}^{b_n}(\mathfrak{g}_{n}, K_{n,\infty}^0; \Pi_{\infty} \otimes E_{\mu})$ and $[\Sigma_{\infty}]^{\epsilon}$ denote a nonzero class in $\mathrm{H}^{b_{n-1}}(\mathfrak{g}_{n-1}, K^0_{n-1, \infty}; \Sigma_{\infty} \otimes E_{\lambda})$ on which $\pi_0(K_{n-1, \infty})$ acts via $\epsilon(\Pi_{\infty})\mathrm{sgn}$.
	The pairing in \eqref{eqn:pairing-nonzero-infinity} is nonzero on the subspace $\mathbb{C}[\Pi_{\infty}] \otimes \mathbb{C}[\Sigma_{\infty}]^{\epsilon} \subset \mathrm{H}^{b_n}(\mathfrak{g}_n, K_{n,\infty}^0; \Pi_{\infty} \otimes E_{\mu}) \otimes \mathrm{H}^{b_{n-1}}(\mathfrak{g}_{n-1}, K_{n-1, \infty}^0; \Sigma_{\infty} \otimes E_{\lambda})$.\\
	
	\noindent

	\subsection{Representation theory: Global} \label{subsec:rep-theory-global}
	The subrepresentation $\Sigma_f \otimes \tau_{n-1} \subset \Sigma$ can be realized in $\mathcal{A}(G_{n-1}(\mathbb{Q})\backslash G_{n-1}(\mathbb{A}))$. This is achieved by an Eisenstein series construction.
	Once we verify that the Eisenstein series is holomorphic at the point of interest, we show that $\Sigma_f$ can in fact be realized as a Hecke submodule of $\mathrm{H}^{b_{n-1}}(S_{n-1}, E_{\lambda})$.
	We end this section by establishing Galois equivariance of the embedding constructed via Eisenstein series.

	\subsubsection{Rational structures on automorphic representations} \label{subsubsec:rational-structure-global}
	Let $(\pi_f, V_{\pi_f})$ be a representation of $\mathrm{G}(\mathbb{A}_f)$. For any $\sigma \in \mathrm{Aut}(\mathbb{C})$, define ${}^{\sigma}\pi_f \colonequals V_{\pi_f} \otimes_{\mathbb{C}, \sigma^{-1}}\mathbb{C}$, a representation of $\mathrm{G}(\mathbb{A}_f)$. The field of rationality of $\pi_f$ is defined to be
	\[
	\mathbb{Q}(\pi_f) \colonequals \mathrm{Fix}\left( \lbrace \sigma \in \mathrm{Aut}(\mathbb{C}) \;|\; {}^{\sigma}\pi_f \cong \pi_f \rbrace \right).
	\]
	A representation $\pi_f$ of $\mathrm{G}(\mathbb{A}_f)$ is said to be defined over a field $\mathbb{Q}(\pi_f)$ if there exists a vector space $V_{\pi_f, 0}$ defined over $\mathbb{Q}(\pi_f)$ together with an action of $\mathrm{G}(\mathbb{A}_f)$
	on it such that $\pi_f \xrightarrow{\sim} V_{\pi_f, 0} \otimes_{\mathbb{Q}(\pi_f)} \mathbb{C}$ 
	as $\mathrm{G}(\mathbb{A}_f)$-modules.
	Note that the field of definition and field of rationality coincide for $\mathrm{G} = \mathrm{GL}(n)$.\\
	
	\noindent
	Clozel \cite[Theorem 3.13]{Clozel-motifs} (See also \cite{Januszewski-18} for general reductive groups) proves that any cuspidal automorphic cohomological regular algebraic representation of general linear group is defined over a number field.
	It can be shown using multiplicity one result due to Shalika, that there is the following $G_{n}(\mathbb{A}_f) \times \pi_0(G_{n}(\mathbb{R})) \times \mathrm{Aut}(\mathbb{C}/\overline{\mathbb{Q}})$-equivariant map
	\begin{equation} \label{eqn:comp-aut-Betti-cusp}
		\begin{tikzcd}[column sep = .6in]
			\Pi_f \otimes [\Pi_{\infty}] \arrow[rr, hook, "p^{\mathrm{aut-B}}(\Pi_f)^{-1}\iota_{\Pi}"] && \mathrm{H}^{b_n}_{\text{cusp}}(S_n, E_{\mu})
		\end{tikzcd}
	\end{equation}
	induced by the inclusion $\Pi \lhook\joinrel\xrightarrow{\iota_{\Pi}} \mathcal{A}_{\mathrm{cusp}}(G_n(\mathbb{Q}) \backslash G_n(\mathbb{A}))$.
	We may call this transcendental quantity $p^{\mathrm{aut-B}}(\Pi_f)$ to be automorphic-Betti period as it arises from comparing the automorphic rational structure and the Betti rational structure.\\
	
	\noindent
	It follows from above that $\pi_f$\footnote{appearing in the definition of the automorphic representation $\Sigma$.} is defined over a number field. Using this and the fact that $\chi_i|\cdot|^{\mathbb{Z}/2}$ is defined over $\overline{\mathbb{Q}}$, we obtain that the $G_{n-1}(\mathbb{A}_f)$-representation $\Sigma_f$ is defined over $\overline{\mathbb{Q}}$. More precisely for any $\sigma \in \mathrm{Aut}(\mathbb{C})$, define
	\begin{equation} \label{eqn:rat-str-Sigma}
		{}^{\sigma}\Sigma_f = {}^{un}\mathrm{Ind}^{G_{n-1}(\mathbb{A}_f)}_{P_{n-1}(\mathbb{A}_f)}\left( {}^{\sigma}(\pi_f) \otimes \left( {}^{\sigma}(\chi_{n-2}|\cdot|^{\frac{2\mu_{\lfloor n/2 \rfloor}+1}{2}}) \otimes {}^{\sigma}(\chi_{n-1}|\cdot|^{-\frac{2\mu_{\lfloor n/2 \rfloor}+1}{2}}) \right)_f \otimes \rho_{P_{n-1}}\right).
	\end{equation}
	Note the induction used above is un-normalized and the superscript ${}^{un}$ is added to distinguish this from the unitary induction. With this action of $\mathrm{Aut}(\mathbb{C})$ on $\Sigma_f$, it is easy to see that $\mathbb{Q}(\Sigma_f)$ is a number field. Thus $\Sigma_f$ is defined over a number field and hence over $\overline{\mathbb{Q}}$.\\
	
	\noindent
	Since $\Sigma$ is an induced representation, $\Sigma_f$ may not admit an embedding in the cohomology of locally symmetric space. Although we will finally show that this can be achieved, initially we show that $\Sigma_f$ admits an embedding in the boundary cohomology of locally symmetric space.
	That is, we prove in the paragraphs below that there exists a $G_{n-1}(\mathbb{A}_f) \times \pi_0(G_{n-1}) \times \mathrm{Aut}(\mathbb{C}/\overline{\mathbb{Q}})$-equivariant embedding
	\[
	\begin{tikzcd}[column sep = 1.2in]
		\Sigma_f \otimes [\Sigma_{\infty}]^{\epsilon} \arrow[r, hook, "p^{\mathrm{aut-B}}(\pi\text{,} \Sigma_{\infty})^{-1}\iota_{\Sigma}"] & \mathrm{H}^{b_{n-1}}(\partial_{P_{n-1}} S_{n-1}, E_{\lambda}),
	\end{tikzcd}
	\]
	where $[\Sigma_{\infty}]^{\epsilon}$ is defined as in \S \ref{subsubsec:critical-point} and $p^{\mathrm{aut-B}}(\pi, \Sigma_{\infty}) \in \mathbb{C}^{\times}$ chosen as in the paragraph below. Recall that there is the following decomposition of boundary cohomology
	\[
	\mathrm{H}^{p}_{\mathrm{cusp}}(\partial_{P_{n-1}} S_{n-1}, E_{\lambda}) = \underset{\substack{w \in W^{P_{n-1}}}}{\bigoplus} {}^{un}\mathrm{Ind}^{G_{n-1}(\mathbb{A}_f)}_{P_{n-1}(\mathbb{A}_f)}\mathrm{H}^{p - l(w)}_{cusp}\big(S_{M_{P_{n-1}}}, \mathrm{H}^{l(w)}(\Lie(R_u(P_{n-1})), E_{\lambda})(w\cdot\lambda)\big)^{\overline{M}_{P_{n-1}}}
	\]
	where $\overline{M}_{P_{n-1}} = \ker( \pi_0(M_{P_{n-1}}) \rightarrow \pi_0(G_{n-1}))$ and $\mathrm{H}^{l(w)}(\Lie(R_u(P_{n-1})), E_{\lambda})(w\cdot\lambda)$ denotes the irreducible $M_P$-subrepresentation with extremal weight $w\cdot\lambda$. It remains to produce a $M_{P_{n-1}}(\mathbb{A}_f) \times \pi_0(M_{P_{n-1}}) \times \mathrm{Aut}(\mathbb{C}/\overline{\mathbb{Q}})$-equivariant map
	\begin{equation} \label{eqn:Galois-equivariant-Levi}
		(\pi \otimes \chi \otimes \rho_{P_{n-1}})_f \otimes [(\pi \otimes \chi \otimes \rho_{P_{n-1}})_{\infty}] \lhook\joinrel\xrightarrow{\; p^{\mathrm{aut-B}}(\pi \text{,} \Sigma_{\infty})^{-1}\iota_{\pi \otimes \chi} \;} \mathrm{H}^{b_{n-1} - l(w)}_{cusp}(S_{M_{P_{n-1}}}, \mathrm{H}^{l(w)}(\Lie(R_u(P_{n-1})), E_{\lambda})(w\cdot\lambda))^{\overline{M}_P}
	\end{equation}
	where the class $[(\pi \otimes \chi \otimes \rho_{P_{n-1}})_{\infty}]$ is given by $\epsilon(\Pi_{\infty})\mathrm{sgn} \otimes \dots \otimes \epsilon(\Pi_{\infty})\mathrm{sgn}$ in the Kunneth isomorphism described in (\ref{eqn:Kunneth-iso-Levi}) and $w \in W^{P_{n-1}}$ is of length $\tfrac{(n^2-4n+3)}{4} + 1$.
	It suffices to produce a $M_{P_{n-1}}(\mathbb{A}_f)$-equivariant embedding $\iota_{\pi\otimes \chi}$, it follows from here as before that there exists a quantity $p^{\mathrm{aut-B}}(\pi, \Sigma_{\infty})$ such that the above map is $\mathrm{Aut}(\mathbb{C}/\overline{\mathbb{Q}})$-equivariant.
	This is independent of the character $\chi$ since only the $\overline{\mathbb{Q}}$-structure is in question and $\chi_{n-2}$ and $\chi_{n-1}$ take values in $\overline{\mathbb{Q}}$.
	Define $w = w_{e_{n-2}-e_{n-1}}w_{\lbrace P_{n-1} \rbrace}$, where
	\[
	w_{\lbrace P_{n-1} \rbrace} = \left( \substack{ \begin{aligned} &1\; &2\; &\dots \; &(n-1)/2 \; &\ (n+1)/2 \; &\dots \; &n-3 &\; n-2 \; &\ n-1 \\ &1\; &3\; &\dots \; &n-2 \; &\ \ \ n-1 \; &\dots \; &\ \ \ 6 \; &4 \; & \ \ \ 2 \end{aligned}} \right),
	\]
	is defined as in \cite{Mahnkopf-05}. The length can be computed to be $l(w) = (n^2-4n+3)/4 + 1$. Recall the twisted action of the Weyl group on the characters $w\cdot \lambda = w(\lambda + \rho_{n-1}) - \rho_{n-1}$. A simple computation yields
	\begin{align*}
		w\cdot \lambda = (\mu_1, -\mu_1-n+3, \dots, &\mu_i+i-1, -\mu_i-n+3i, \dots, \mu_{\lfloor n/2 -1 \rfloor}+ \lfloor n/2 -1 \rfloor -1,\\
		&-\mu_{\lfloor n/2 -1 \rfloor}-n+3\lfloor n/2 -1 \rfloor, -\mu_{\lfloor n/2 \rfloor} + (n-5)/2, \mu_{\lfloor n/2 \rfloor} + (n-1)/2).
	\end{align*}
	Let $E_{w\cdot \lambda}$ denotes the representation with extremal weight $w\cdot \lambda$. Consider the representation $(\pi_{\infty}\otimes \chi_{\infty} \otimes \rho_{P_{(2, \dots, 2, 1, 1)}}) \otimes E_{w\cdot\lambda}$ of $\mathrm{GL}(2, \mathbb{R})\times \dots \times \mathrm{GL}(2, \mathbb{R}) \times \mathbb{G}_{m}(\mathbb{R}) \times \mathbb{G}_{m}(\mathbb{R})$.
	Using that the restriction of this representation to $\mathbb{G}_{m} \times \mathbb{G}_{m}$ is $\epsilon(\Pi_{\infty})\mathrm{sgn} \otimes \epsilon(\Pi_{\infty})\mathrm{sgn}$, we get the following isomorphism as $\pi_0(M_{P_{n-1}})$-modules
	\begin{equation} \label{eqn:Kunneth-iso-Levi}
		\begin{split}
			&\mathrm{H}^{(n-3)/2}(\mathfrak{m}_P, (K^{M_P}_{n-1, \infty})^0, (\pi_{\infty}\otimes \chi_{\infty} \otimes \rho_{P_{(2, \dots, 2, 1, 1)}}) \otimes E_{w\cdot\lambda})\\
			&=  \mathrm{H}^{(n-3)/2}\left(\mathfrak{g}_2 \times \dots \times \mathfrak{g}_2, SO(2) \times \dots \times SO(2), (\pi_{\infty} \otimes \rho_{P_{(2, \dots, 2, 1,1)}} \otimes E_{w\cdot \lambda})\right)\\
			&\hspace{2in} \otimes \mathrm{H}^0(\mathbb{G}_{m} \times \mathbb{G}_{m}, \epsilon(\Pi_{\infty})\mathrm{sgn}\otimes \epsilon(\Pi_{\infty})\mathrm{sgn}).
		\end{split}
	\end{equation}
	Now taking the invariants under $\ker (\pi_0(M_{P_{n-1}}) \rightarrow \pi_0(G_{n-1}))$, we get the class represented by $\epsilon(\Pi_{\infty})\mathrm{sgn} \otimes \dots \otimes \epsilon(\Pi_{\infty})\mathrm{sgn}$ survives. Thus we get that the desired map from \eqref{eqn:Galois-equivariant-Levi} exists. Applying un-normalized induction we get the desired embedding
	\[
	{}^{un}\mathrm{Ind}\left( \pi \otimes \chi \otimes \rho_{P_{n-1}}\right)_f \otimes \epsilon(\Pi_{\infty})\mathrm{sgn} \lhook\joinrel\xrightarrow{\iota_{\Sigma}} \mathrm{H}^{b_{n-1}}(\partial_{P_{n-1}}S_{n-1}, E_{\lambda}).
	\]
	Since $p^{\mathrm{aut-B}}(\pi, \Sigma_{\infty})^{-1}\iota_{\pi \otimes \chi}$ is $M_{P_{n-1}}(\mathbb{A}_f) \times \pi_0(G_{n-1}(\mathbb{R})) \times \mathrm{Aut}(\mathbb{C}/\overline{\mathbb{Q}})$-equivariant map, the map $\iota_{\Sigma}$ has the desired properties.

	\begin{remark}
		Since we are only interested in algebraicity results for $L$-values, we will only be concerned with $\overline{\mathbb{Q}}$-structures and ignore any subtleties arising from the fact that the objects of interest might be defined over some particular number field.
	\end{remark}

	\subsubsection{Intertwining operators : zeros and poles}
	To verify that Eisenstein series map defined in \eqref{eqn:defining-Eis-map} is well defined at $s=2\mu_{\lfloor n/2 \rfloor}+1$, it suffices to check that the intertwining operators which appear in the computation of the constant term of the Eisenstein series are well defined at the point of evaluation.
	Recall that the intertwining operators which appear in this computation are denoted by $M(w, s)$, where $w$ is an element of the Weyl group of $G_{n-1}$ of minimal length modulo the Weyl group of $M_{P_{n-1}}$ such that $wM_{P_{n-1}}w^{-1}$ is again a standard Levi of $G_{n-1}$. As is the convention, this subset is denoted by $W(M_{P_{n-1}})$.
	Let $r(w, s)$ denote the normalization factors for the intertwining operators $M(w, s)$ as dictated by the work of M{\oe}glin-Waldspurger \cite{Moeglin-Waldspurger-residual}.
	It is known that the poles and zeros of the global intertwining operator is governed by the normalizing factors which are a ratio of certain global $L$-functions.
	\begin{lemma} \label{lemma:intertwine-iso}
		The global intertwining operators $M\big(w, s\big)$ are invertible for $s=2\mu_{\lfloor n/2 \rfloor}+1$ and $w \in W(M_{P_{n-1}})$ such that $w(e_{n-2}-e_{n-1})$ is again positive root.
	\end{lemma}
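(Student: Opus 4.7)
The plan is to apply the Langlands–Shahidi factorization of $M(w, s)$ into rank-one intertwining operators and then control each factor via explicit automorphic $L$-function computations, using the hypothesis on $w$ to exclude the one problematic root.

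First I would write
\[
    M(w, s) = \prod_{\substack{\alpha \in \Phi^+ \setminus \Phi^+_{M_{P_{n-1}}} \\ w\alpha < 0}} M_\alpha(s),
\]
a product of rank-one intertwining operators indexed by positive roots $\alpha$ sent to negative roots by $w$. For each such $\alpha$ I would apply the Shahidi normalization $M_\alpha(s) = r_\alpha(s)N_\alpha(s)$, where $r_\alpha(s)$ is an explicit ratio of completed global $L$-functions (Rankin–Selberg or Hecke, depending on $\alpha$) attached to the pair of Levi-blocks coupled by $\alpha$, and $N_\alpha(s)$ is a normalized operator known to be holomorphic and invertible in a region containing $s = 2\mu_{\lfloor n/2\rfloor}+1$, given that the inducing data consists of distinct unitary cuspidal $\pi_i$'s together with the characters $\chi_{n-2}, \chi_{n-1}$ shifted only along the single non-unitary direction $|\cdot|^{\pm (2\mu_{\lfloor n/2\rfloor}+1)/2}$.

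Next I would classify the roots $\alpha$ appearing in the product according to the pair of blocks of the Levi $M_{P_{n-1}} \cong \mathrm{GL}(2)^{(n-3)/2} \times \mathbb{G}_m \times \mathbb{G}_m$ that $\alpha$ pairs. There are three cases: (i) $\alpha$ joins two $\mathrm{GL}(2)$-blocks, in which case $r_\alpha$ is a ratio of $L(\cdot, \pi_i \times \pi_j^\vee)$ values that is finite and nonzero at the evaluation point by absolute convergence and the Jacquet–Shalika non-vanishing on $\mathrm{Re}(s)=1$ (the $\pi_i$ are distinct by assumption); (ii) $\alpha$ joins a $\mathrm{GL}(2)$-block to one of the two $\mathbb{G}_m$-blocks, in which case $r_\alpha$ is a ratio of twisted $L(\cdot, \pi_i \times \chi_{n-2})$ or $L(\cdot, \pi_i \times \chi_{n-1})$ values (after absorbing the $|\cdot|^{\pm(2\mu_{\lfloor n/2 \rfloor}+1)/2}$ shift), again finite and non-vanishing at the evaluation point; and (iii) the root $e_{n-2}-e_{n-1}$ itself, whose normalization involves the Hecke $L$-function of $\chi_{n-2}\chi_{n-1}^{-1}$ with a shift of $2\mu_{\lfloor n/2\rfloor}+1$, and which is precisely the root excluded by the hypothesis $w(e_{n-2}-e_{n-1}) > 0$.

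The main obstacle is the careful bookkeeping of the exact $L$-function shifts in each $r_\alpha$ and the simultaneous verification that each $N_\alpha$ is invertible at the evaluation point for all non-excluded roots; both are standard consequences of Moeglin–Waldspurger \cite{Moeglin-Waldspurger-residual} and Shahidi, but require keeping track of the precise character structure of the inducing data along each root direction. The role of the hypothesis on $w$ is to exclude exactly the unique root whose contribution to the normalization would otherwise force $M(w,s)$ to fail invertibility at $s = 2\mu_{\lfloor n/2 \rfloor}+1$; once that root is removed from the index set of the factorization, the remaining rank-one factors are all invertible and so is their product.
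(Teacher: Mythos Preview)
Your overall strategy matches the paper's: reduce to rank-one intertwining operators attached to simple roots of the relative root system $\Phi(T_{M_{P_{n-1}}}, G_{n-1})$, compute the normalizing $L$-ratios in each case, and use the hypothesis on $w$ to exclude the root $e_{n-2}-e_{n-1}$. However, there is a genuine gap in your treatment of case (ii).

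For the roots coupling a $\mathrm{GL}(2)$-block to a $\mathbb{G}_m$-block, the normalizing factor has the form
\[
    \frac{L(\mu_{\lfloor n/2\rfloor}+1/2,\, \chi_{n-2}\times \pi_i^\vee)}{L(\mu_{\lfloor n/2\rfloor}+3/2,\, \chi_{n-2}\times \pi_i^\vee)}
    \quad\text{or}\quad
    \frac{L(\mu_{\lfloor n/2\rfloor}+1/2,\, \pi_i\times \chi_{n-1}^{-1})}{L(\mu_{\lfloor n/2\rfloor}+3/2,\, \pi_i\times \chi_{n-1}^{-1})}.
\]
You assert these are ``again finite and non-vanishing at the evaluation point'' with no justification. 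The denominators are fine, but the numerators are $\mathrm{GL}(2)\times\mathrm{GL}(1)$ values at $s=\mu_{\lfloor n/2\rfloor}+1/2$, and when $\mu_{\lfloor n/2\rfloor}=0$ these are central values $L(1/2,\pi_i\times\chi)$, whose non-vanishing is \emph{not} automatic. This is precisely why the paper imposes $\mathbf{Assumption}(\mu,\chi_{n-2},\chi_{n-1})$ (equation \eqref{eqn:nonvanishing-central-L-values}), and the paper's proof explicitly invokes that assumption at this step. Your proposal omits it entirely, so as written your argument does not go through in the non-regular case that is the whole point of the paper.

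A smaller point: rather than appealing to general regularity of the normalized operators $N_\alpha$, the paper argues directly that each rank-one $M(w_\alpha,s)$ is a nonzero map between \emph{irreducible} induced representations (irreducibility of $\mathrm{Ind}^{G_4}_{P_{(2,2)}}(\pi_i\times\pi_{i+1})$ for $\pi_i\neq\pi_{i+1}$ unitary cuspidal, and of $\mathrm{Ind}^{G_3}_{P_{(2,1)}}(\pi_i\times\chi|\cdot|^s)$ for $s>0$), hence an isomorphism by Schur. This is both cleaner and avoids the bookkeeping you flag as the ``main obstacle''.
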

	\begin{proof}
		Let $T_{M_{P_{n-1}}} \subset T_{n-1}$ be the maximal torus contained in the center of $M_{P_{n-1}}$ and let $\Phi(T_{M_{P_{n-1}}}, G_{n-1})$ denote the root system. For each simple element $\alpha \in \Phi(T_{M_{P_{n-1}}}, G_{n-1})$, let $w_{\alpha} \in W(M_{P_{n-1}})$ denote the corresponding element.
		It is known that elements $w_{\alpha}$ generate $W(M_{P_{n-1}})$ where $\alpha \in \Phi(T_{M_{P_{n-1}}}, G_{n-1})$. Let $w_{\alpha}$ be an element of $W(M_{P_{n-1}})$ where $\alpha$ is not the root $e_{n-2}-e_{n-1}$.
		Since $\alpha$ is simple, the corresponding intertwining operators and their normalizing factors are of the form listed below
		\[
		\begin{tikzcd}[column sep = .5 ex]
			\mathrm{Ind}^{G_4}_{P_{(2,2)}}(\pi_i \otimes \pi_{i+1}) \rightarrow \mathrm{Ind}^{G_4}_{P_{(2,2)}}(\pi_{i+1} \otimes \pi_{i}) & r = \frac{L(0, \pi_i \times \pi^{\vee}_{i+1})}{L(1, \pi_i \times \pi^{\vee}_{i+1})}.\\
			\mathrm{Ind}^{G_3}_{P_{(2,1)}}(\chi_{n-2}|\cdot|^{\mu_{\lfloor n/2 \rfloor}+\frac{1}{2}} \times \pi_i) \rightarrow \mathrm{Ind}^{G_3}_{P_{(2,1)}}(\pi_i \times \chi_{n-2}|\cdot|^{\mu_{\lfloor n/2 \rfloor}+\frac{1}{2}}) & r = \frac{L(\mu_{\lfloor n/2 \rfloor}+\frac{1}{2}, \chi_{n-2} \times \pi^{\vee}_i)}{L(\mu_{\lfloor n/2 \rfloor}+\frac{3}{2}, \chi_{n-2} \times \pi^{\vee}_i)}.\\
			\mathrm{Ind}^{G_3}_{P_{(2,1)}}(\pi_i \times \chi_{n-1}|\cdot|^{-\mu_{\lfloor n/2 \rfloor}-\frac{1}{2}}) \rightarrow \mathrm{Ind}^{G_3}_{P_{(2,1)}}(\chi_{n-1}|\cdot|^{-\mu_{\lfloor n/2 \rfloor}-\frac{1}{2}} \times \pi_i) & r = \frac{L(\mu_{\lfloor n/2 \rfloor}+\frac{1}{2}, \pi_i \times \chi^{-1}_{n-1})}{L(\mu_{\lfloor n/2 \rfloor}+\frac{3}{2}, \pi_i \times \chi^{-1}_{n-1})}.
		\end{tikzcd}
		\]
		Since $\pi_i$'s are distinct and they satisfy $\mathbf{Assumption(\mu, \chi_{n-2}, \chi_{n-1})}$, all the normalizing factors are nonzero. Hence the intertwining operators $M(w_{\alpha}, s)$ are all nonzero.
		Note that the representations $\mathrm{Ind}^{\mathrm{GL}(4)}_{P_{(2,2)}}(\pi_i \times \pi_{i+1})$ for unitary cuspidal representations $\pi_i \neq \pi_{i+1}$, and $\mathrm{Ind}^{\mathrm{GL}(3)}_{P_{(2,1)}}(\pi_i \times \chi|\cdot|^s)$ for $\Re{(s)} > 0$ are irreducible. Hence the intertwining operators are isomorphisms.
	\end{proof}

	\section{Eisenstein series construction and its rationality properties} \label{sec:global-map-Eis-non-zero}
	
	Consider the map
	\begin{equation} \label{eqn:defining-Eis-map}
		\begin{tikzcd}[column sep = 1ex]
			\mathrm{Ind}^{G_{n-1}(\mathbb{A})}_{P_{n-1}(\mathbb{A})}\left( \pi\otimes \chi_{n-2}|\cdot|^{s/2} \otimes \chi_{n-1}|\cdot|^{-s/2}\right) \arrow[r, "\mathrm{Eis(s, \cdot, \cdot)}"] & \mathcal{A}\left( G_{n-1}(\mathbb{Q})\backslash G_{n-1}(\mathbb{A})\right);\\
			\phi(s, \cdot)  \longmapsto & \hspace{-1in}\mathrm{Eis}(s,\phi, g) \colonequals \underset{\gamma \in P_{n-1}(\mathbb{Q})\backslash G_{n-1}(\mathbb{Q})}{\sum} e^{\langle H_{P_{n-1}}(\gamma g), \chi(s)+\rho_{P_{n-1}} \rangle} \phi(\gamma g),
		\end{tikzcd}
	\end{equation}
	where $\chi(s)$ is the character of the torus given by
	\[
	diag(*, a_{n-2},a_{n-1}) \mapsto \chi_{n-2}(a_{n-2})\chi_{n-1}(a_{n-1})|a_{n-2}|^{s/2}|a_{n-1}|^{-s/2}.
	\]
	Recall that, the behaviour of Eisenstein series is dictated by the constant term of the Eisenstein series for standard parabolic subgroups $P_{n-1}'$ that are associate to $P_{n-1}$.

	\subsection{Holomorphy and injectivity of $\mathrm{Eis}$ map}
	Consider the restriction of the map $\mathrm{Eis}$ to the subrepresentation $\Sigma_f \otimes \tau_{n-1} \subset \Sigma$. We continue to denote this map by $\mathrm{Eis}$.

	\begin{prop}
		The map $\mathrm{Eis}$ is defined at $s = 2\mu_{{\lfloor n/2 \rfloor}}+1$.
	\end{prop}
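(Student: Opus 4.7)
The plan is to reduce holomorphy of $\mathrm{Eis}(s,\phi,g)$ at $s_0 \colonequals 2\mu_{\lfloor n/2 \rfloor}+1$ to holomorphy, at $s_0$, of the global intertwining operators $M(w,s)$ indexed by $w \in W(M_{P_{n-1}})$. By the standard theory of Eisenstein series, $\mathrm{Eis}(s,\phi,\cdot)$ is holomorphic at $s_0$ if and only if every constant term along a parabolic associate to $P_{n-1}$ is holomorphic there, and these constant terms are explicit linear combinations of the sections $M(w,s)\phi$. Thus it suffices to prove holomorphy of $M(w,s_0)$ for every $w \in W(M_{P_{n-1}})$.

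Next, I would fix a reduced expression $w = w_{\alpha_r}\cdots w_{\alpha_1}$ in the simple reflections with respect to $(T_{M_{P_{n-1}}}, G_{n-1})$ and factor $M(w,s)$ accordingly as a composition of rank-one intertwining operators, each evaluated at a shifted parameter determined by the partial word. The simple reflections split into two classes: those for which $w_{\alpha_i}(e_{n-2}-e_{n-1})$ remains a positive root, already treated in Lemma~\ref{lemma:intertwine-iso} (the hypothesis that the $\pi_i$ are distinct together with $\mathbf{Assumption}(\mu,\chi_{n-2},\chi_{n-1})$ forces the three displayed ratios of $L$-values to be finite and nonzero at $s_0$); and the unique exceptional reflection $w_{e_{n-2}-e_{n-1}}$. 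For the latter, the rank-one operator lives on
\[
\mathrm{Ind}^{G_2(\mathbb{A})}_{B_2(\mathbb{A})}\bigl(\chi_{n-2}|\cdot|^{s/2} \otimes \chi_{n-1}|\cdot|^{-s/2}\bigr),
\]
with M\oe glin--Waldspurger normalizing factor
\[
r\bigl(w_{e_{n-2}-e_{n-1}},\, s\bigr) \;=\; \frac{L(s,\, \chi_{n-2}\chi_{n-1}^{-1})}{L(s+1,\, \chi_{n-2}\chi_{n-1}^{-1})}.
\]
Because $\chi_{n-2} \neq \chi_{n-1}$ are both of finite order, $\chi_{n-2}\chi_{n-1}^{-1}$ is a nontrivial finite order Hecke character; hence the numerator is entire and the denominator is given by an absolutely convergent Euler product at any $s$ with $\mathrm{Re}(s) \geq 1$. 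Consequently $r(w_{e_{n-2}-e_{n-1}},\,s_0)$ is finite and nonzero, and the corresponding normalized rank-one operator is holomorphic on $\mathrm{Re}(s) \geq 0$ by standard results, so $M(w_{e_{n-2}-e_{n-1}},\, s_0)$ is holomorphic.

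The main obstacle is bookkeeping rather than arithmetic: in the factorization $M(w,s) = \prod_i M(w_{\alpha_i},\, s_i)$, each rank-one factor is evaluated at its own shifted parameter $s_i$ coming from the partial word and the twisted Weyl action on the inducing character, and one must verify that at $s = s_0$ each such intermediate parameter lies in the region where the case analysis above applies, so that no stray pole of a normalized operator or of a normalizing $L$-factor is encountered. The combined conditions $s_0 \geq 1$, distinctness of the $\pi_i$, distinctness of $\chi_{n-2}$ and $\chi_{n-1}$, and $\mathbf{Assumption}(\mu,\chi_{n-2},\chi_{n-1})$ are precisely what make this bookkeeping go through; collecting the factor-wise holomorphy statements yields holomorphy of $M(w,s_0)$ for every $w \in W(M_{P_{n-1}})$, completing the proof.
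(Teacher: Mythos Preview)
Your argument is correct but takes a different route from the paper's. You treat the exceptional simple reflection $w_{e_{n-2}-e_{n-1}}$ by showing that the corresponding rank-one global intertwining operator is \emph{holomorphic} at $s_0$, using that $\chi_{n-2}\chi_{n-1}^{-1}$ is a nontrivial finite-order character so that the normalizing ratio $L(s,\chi_{n-2}\chi_{n-1}^{-1})/L(s+1,\chi_{n-2}\chi_{n-1}^{-1})$ is finite at $s_0\geq 1$. The paper instead exploits the sentence immediately preceding the proposition: $\mathrm{Eis}$ has been restricted to the subrepresentation $\Sigma_f\otimes\tau_{n-1}\subset\Sigma$. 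At the archimedean place the rank-one operator for $w_{e_{n-2}-e_{n-1}}$ is the $\mathrm{GL}(2,\mathbb{R})$ intertwiner out of $\mathrm{Ind}^{G_2(\mathbb{R})}_{B_2(\mathbb{R})}(\chi_\infty)$ at its reducibility point, whose kernel is exactly the discrete series $D_{2\mu_{\lfloor n/2\rfloor}+1}$; hence $M(w_{e_{n-2}-e_{n-1}},s_0)$ \emph{vanishes identically} on $\Sigma_f\otimes\tau_{n-1}$. For any $w$ with $w(e_{n-2}-e_{n-1})<0$ one writes $w=w'\,w_{e_{n-2}-e_{n-1}}$ with $w'(e_{n-2}-e_{n-1})>0$, so $M(w,s_0)$ factors through this zero map, while the remaining $w$ are handled by Lemma~\ref{lemma:intertwine-iso}.

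What each approach buys: yours proves the slightly stronger statement that $\mathrm{Eis}$ is holomorphic at $s_0$ on all of $\Sigma$, not just on $\Sigma_f\otimes\tau_{n-1}$, at the cost of invoking holomorphy of normalized operators and a bit more bookkeeping. The paper's vanishing argument is shorter and, crucially, is reused verbatim in the very next proposition on injectivity of $\mathrm{Eis}^{\mathrm{coh}}$: there the constant term along $P_{n-1}$ is computed as $\phi + M(w_{e_{n-2}-e_{n-1}},s_0)\phi = \phi$ on $\Sigma_f\otimes\tau_{n-1}$, which needs vanishing, not merely holomorphy. So while your proof of the present proposition stands on its own, you would still need to observe the archimedean kernel fact to proceed to injectivity.
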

	
	\begin{proof}
		Recall that the intertwining operators $M(w, s)$ for $w \in W(M_{P_{n-1}})$ such that $w_{e_{n-2}-e_{n-1}}$ does not appear in the decomposition of $w$ are isomorphisms and in particular are all well-defined.
		Note that $M(w_{\alpha}, s)$ where $\alpha = e_{n-2}-e_{n-1}$ vanishes when restricted to $\Sigma_f \otimes \tau_{n-1}$. Hence $M(w, s)$ is well-defined at $s = 2\mu_{{\lfloor n/2 \rfloor}}+1$ for all $w$ which appear in the computation of the constant term of the Eisenstein series. Thus we can conclude that the map $\mathrm{Eis}$ is well-defined as well.
	\end{proof}
	
	\noindent
	Thus we may consider the map induced on cohomology
	\begin{align} \label{eqn:Eis}
		&\mathrm{Eis}^{\mathrm{coh}} : \Sigma_f \otimes \mathrm{H}^{b_{n-1}}\left(\mathfrak{g}_{n-1}, K^0_{n-1, \infty}Z_{n-1}(\mathbb{R})^0; \tau_{n-1} \otimes E_{\lambda}\right) \nonumber \\
		&\hspace{1.3in} \longrightarrow \mathrm{H}^{b_{n-1}}\left(\mathfrak{g}_{n-1}, K^0_{n-1,\infty}Z_{n-1}(\mathbb{R})^0; \mathcal{A}(G_{n-1}(\mathbb{Q})\backslash G_{n-1}(\mathbb{A})\right).
	\end{align}

	\begin{prop}
		The map $\mathrm{Eis}^{\mathrm{coh}}$ is injective.
	\end{prop}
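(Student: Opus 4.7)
The plan is to establish injectivity of $\mathrm{Eis}^{\mathrm{coh}}$ by factoring it through the restriction-to-boundary map
\[
    r : \mathrm{H}^{b_{n-1}}(S_{n-1}, E_{\lambda}) \longrightarrow \mathrm{H}^{b_{n-1}}(\partial_{P_{n-1}} S_{n-1}, E_{\lambda}),
\]
and showing that $r \circ \mathrm{Eis}^{\mathrm{coh}}$ admits a left inverse. Since injectivity of a composition forces injectivity of each factor we apply first, this will suffice.

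At the representation-theoretic level the map $r$ is modelled by the constant term along $U_{P_{n-1}}$, given by the Weyl sum
\[
    \mathrm{Eis}(s_{0}, \phi)^{U_{P_{n-1}}} = \sum_{w \in W(M_{P_{n-1}})} M(w, s_{0})\phi, \qquad s_{0} = 2\mu_{\lfloor n/2 \rfloor} + 1.
\]
Two features make this sum manageable at our specific $s_{0}$. First, for $w \in W(M_{P_{n-1}})$ whose reduced expression does not involve the reflection $w_{e_{n-2}-e_{n-1}}$, Lemma \ref{lemma:intertwine-iso} shows that $M(w, s_{0})$ is an isomorphism. Second, for $w$ involving $w_{e_{n-2}-e_{n-1}}$, the corresponding archimedean local intertwiner kills $\tau_{n-1}$: indeed, by the short exact sequence $0 \to D_{2\mu_{\lfloor n/2 \rfloor}+1} \to \mathrm{Ind}_{B_2}^{G_2}(\chi_{\infty}) \to \mathbb{C}_{\epsilon} \to 0$ used in the proof of Proposition \ref{prop-coho}(2), the piece $\tau_{n-1}$ lies in the kernel of the relevant archimedean intertwiner, so the global operator vanishes on $\Sigma_{f} \otimes \tau_{n-1}$.

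Passing to cohomology, the Franke--Schwermer decomposition of $\mathrm{H}^{b_{n-1}}(\partial_{P_{n-1}} S_{n-1}, E_{\lambda})$ (indexed by associate parabolics and Kostant representatives $W^{P_{n-1}}$) breaks $r \circ \mathrm{Eis}^{\mathrm{coh}}$ into a finite sum of contributions, one for each surviving term in the Weyl sum. The contribution from $w = 1$ coincides, up to nonzero scalars, with the embedding $\iota_{\Sigma}$ from \S \ref{subsubsec:rational-structure-global}, whose injectivity was already established. Projecting $r \circ \mathrm{Eis}^{\mathrm{coh}}$ onto the Franke--Schwermer summand supporting $\iota_{\Sigma}$ therefore yields a left inverse.

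The main obstacle is verifying that the Weyl summands really are distinguishable in the target boundary cohomology, so that the projection onto the $w=1$ piece is well defined. For $w$'s not involving $w_{e_{n-2}-e_{n-1}}$, the assumption that the $\pi_{i}$ are pairwise distinct unitary cuspidal representations (and that $\chi_{n-2} \neq \chi_{n-1}$) ensures that a nontrivial permutation of inducing data yields a non-isomorphic representation on the Levi, hence a different Franke--Schwermer summand; for $w$'s involving $w_{e_{n-2}-e_{n-1}}$, the vanishing noted above removes them from the sum. Once this independence is in place, the projection construction completes the proof.
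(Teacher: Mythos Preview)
Your argument is correct and follows the same strategy as the paper: compose $\mathrm{Eis}^{\mathrm{coh}}$ with restriction to the boundary component $\partial_{P_{n-1}}S_{n-1}$, identify this with the constant term along $U_{P_{n-1}}$ via Schwermer's result, and recover the identity (hence injectivity) from the $w=1$ term of the Weyl sum. The paper's proof is considerably terser---it writes the constant term as only two summands, $\phi + M((3,4),s_0)\phi$, which is literally correct only for $n=5$ and otherwise should be read as having already projected onto the $(\pi,\chi)$-isotypic component of the boundary cohomology; your version makes this projection step explicit by separating the Weyl elements into those involving $w_{e_{n-2}-e_{n-1}}$ (which annihilate $\tau_{n-1}$ at infinity) and the block permutations $\sigma$ (which land in distinct cuspidal summands because the $\pi_i$ are pairwise non-isomorphic).

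Two small points of phrasing. First, when you say the projection ``yields a left inverse'', what you actually obtain is that $\mathrm{proj}\circ r\circ \mathrm{Eis}^{\mathrm{coh}}$ equals $\iota_\Sigma$, which is injective---so the composite is injective rather than literally a left inverse; this is of course enough. Second, the vanishing of $M(w,s_0)$ on $\tau_{n-1}$ for $w$ involving $w_{e_{n-2}-e_{n-1}}$ relies on factoring $w=\sigma\cdot w_{e_{n-2}-e_{n-1}}$ with lengths adding (so that $M(w_{e_{n-2}-e_{n-1}},s_0)$ acts first); since the block permutation $\sigma$ and the transposition $w_{e_{n-2}-e_{n-1}}$ act on disjoint coordinates this reduced factorization is immediate, but it is worth saying so.
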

	
	\begin{proof}
		The idea is to proceed as in the work of Harder and Mahnkopf \cite{Harder-87, Mahnkopf-98}. We know from \cite[Satz 1.10]{Schwermer-Kohomologie} that the class of the restriction of a closed differential form $\omega$ to the boundary $\partial_{P_{n-1}}S_{n-1}$ is the same as the class of the constant term of $\omega$ along $P_{n-1}$.
		Recall that for $\phi_{\infty}$ which is any element of $\tau_{n-1}$, then $M((3,4), 2\mu_{\lfloor n/2 \rfloor}+1)\phi_{\infty} = 0$. This implies that for any $\phi_{f}\otimes \phi_{\infty} \in \Sigma_f \otimes \tau_{n-1}$,
		\[
		\mathrm{Eis}^{U_{P_{n-1}}}(\phi_f \otimes \phi_{\infty}) = \phi_f \otimes \phi_{\infty} + M((3, 4), 2\mu_{\lfloor n/2 \rfloor}+1)(\phi_f\otimes \phi_{\infty}) = \phi_f \otimes \phi_{\infty}.
		\]
		Thus the map $\mathrm{Eis}^{\mathrm{coh}}$ composed with the constant term map, which is the same as considering constant term of the elements in the image of $\mathrm{Eis^{coh}}$ along the parabolic subgroup $P_{n-1}$ is identity. Hence the construction above gives us an injective map.\\
	\end{proof}

	\subsection{$\mathrm{Aut}(\mathbb{C}/\overline{\mathbb{Q}})$-equivariance of $\mathrm{Eis}^{\mathrm{coh}}$ map}
	In this subsection we consider the question of comparing the rational structures on the domain and target of the map in equation \eqref{eqn:Eis}. The rational structure on the domain is recalled in \S \ref{subsubsec:rational-structure-global} and the rational structure on the target is the Betti rational structure.\\
	
	\begin{prop} \label{prop:Eis-rationality}
		The map
		\begin{equation} \label{eqn:}
			\Sigma_f \xrightarrow{\quad p^{\mathrm{aut-B}}(\pi, \Sigma_{\infty})^{-1}\mathrm{Eis^{coh}}\quad} \mathrm{H}^{b_{n-1}}(S_{n-1}, E_{\lambda})
		\end{equation}
		is $\textup{Aut}(\mathbb{C}/\overline{\mathbb{Q}})$-equivariant with respect to the $\overline{\mathbb{Q}}$-structure on the representation $\Sigma_f$ described in \S \ref{subsubsec:rational-structure-global} and the natural $\overline{\mathbb{Q}}$-structure on Betti cohomology.
	\end{prop}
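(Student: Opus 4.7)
The strategy is to reduce the Galois-equivariance statement to the boundary, where it is already known from the construction in \S \ref{subsubsec:rational-structure-global}. Let $\overline{S}_{n-1}$ denote the Borel--Serre compactification of $S_{n-1}$, and consider the composite
\[
r \colon \mathrm{H}^{b_{n-1}}(S_{n-1}, E_{\lambda}) \longrightarrow \mathrm{H}^{b_{n-1}}(\partial S_{n-1}, E_{\lambda}) \twoheadrightarrow \mathrm{H}^{b_{n-1}}(\partial_{P_{n-1}} S_{n-1}, E_{\lambda})
\]
obtained by restricting to the Borel--Serre boundary and projecting onto the $P_{n-1}$-face. Since both arrows come from morphisms of arithmetic spaces defined over $\mathbb{Q}$, the map $r$ is $\mathrm{Aut}(\mathbb{C})$-equivariant with respect to the Betti $\mathbb{Q}$-structures. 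The first step is to identify $r \circ \mathrm{Eis}^{\mathrm{coh}}$ with the boundary embedding $\iota_{\Sigma}$; this is exactly the constant-term computation used in the injectivity proposition above, since on $\Sigma_f \otimes \tau_{n-1}$ the only surviving Weyl-group contribution is the identity term, the other contribution $M(w_{e_{n-2}-e_{n-1}}, 2\mu_{\lfloor n/2 \rfloor}+1)$ vanishing on $\tau_{n-1}$.

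Granting this identification, the $\mathrm{Aut}(\mathbb{C}/\overline{\mathbb{Q}})$-equivariance of $p^{\mathrm{aut-B}}(\pi, \Sigma_\infty)^{-1}\iota_\Sigma$ established in \S \ref{subsubsec:rational-structure-global} together with the equivariance of $r$ yields, for every $\sigma \in \mathrm{Aut}(\mathbb{C}/\overline{\mathbb{Q}})$ and every $v \in \Sigma_f$, the equality
\[
r\bigl({}^\sigma[p^{\mathrm{aut-B}}(\pi, \Sigma_\infty)^{-1}\mathrm{Eis}^{\mathrm{coh}}(v)]\bigr) \;=\; r\bigl(p^{\mathrm{aut-B}}(\pi, \Sigma_\infty)^{-1}\mathrm{Eis}^{\mathrm{coh}}({}^\sigma v)\bigr).
\]
Moreover, the identity $r \circ \mathrm{Eis}^{\mathrm{coh}} = \iota_\Sigma$ together with the injectivity of $\iota_\Sigma$ shows that $r$ is injective on $\mathrm{im}(\mathrm{Eis}^{\mathrm{coh}})$. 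The proof is therefore reduced to verifying that both sides of the displayed equation lie in a common subspace on which $r$ is injective.

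The main obstacle is this final step, namely checking that ${}^\sigma\mathrm{im}(\mathrm{Eis}^{\mathrm{coh}}) = \mathrm{im}(\mathrm{Eis}^{\mathrm{coh}})$, so that the $r$-injectivity on the right-hand space transfers to the difference of the two terms above. Since $\pi_f$ and $\chi$ are defined over $\overline{\mathbb{Q}}$ we have ${}^\sigma\Sigma_f \cong \Sigma_f$, and by the Franke--Schwermer description of Eisenstein cohomology refined in \cite{Grobner-residual}, the Hecke isotypic component of $\mathrm{H}^{b_{n-1}}_{\mathrm{Eis}}(S_{n-1}, E_\lambda)$ corresponding to the cuspidal support $(\pi,\chi)$ is $\mathrm{Aut}(\mathbb{C})$-stable. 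Lemma \ref{lemma:intertwine-iso}, combined with the vanishing of $M(w_{e_{n-2}-e_{n-1}},\cdot)$ on $\tau_{n-1}$, shows that the Eisenstein series at our evaluation point is holomorphic and nonresidual, so this isotypic component lives purely in $\mathrm{H}^{b_{n-1}}_{\mathrm{Eis}}$ and meets $\ker r$ trivially. Consequently ${}^\sigma\mathrm{im}(\mathrm{Eis}^{\mathrm{coh}})$ coincides with $\mathrm{im}(\mathrm{Eis}^{\mathrm{coh}})$, $r$ is injective on this common subspace, and the displayed equality forces the two sides to agree, establishing the desired $\mathrm{Aut}(\mathbb{C}/\overline{\mathbb{Q}})$-equivariance.
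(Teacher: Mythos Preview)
Your strategy coincides with the paper's: restrict to the $P_{n-1}$-face of the Borel--Serre boundary, use that the constant term along $P_{n-1}$ of $\mathrm{Eis}$ on $\Sigma_f\otimes\tau_{n-1}$ is the identity (so $r\circ\mathrm{Eis^{coh}}=\iota_\Sigma$), invoke the already-established equivariance of $p^{\mathrm{aut-B}}(\pi,\Sigma_\infty)^{-1}\iota_\Sigma$, and conclude by showing $r$ is injective on a $\mathrm{Aut}(\mathbb{C}/\overline{\mathbb{Q}})$-stable subspace containing both terms.

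The gap is in your last paragraph. The assertion that the relevant isotypic piece ``meets $\ker r$ trivially'' does not follow from ``holomorphic and nonresidual'' alone, and Lemma~\ref{lemma:intertwine-iso} plus the vanishing of $M(w_{e_{n-2}-e_{n-1}},\cdot)$ on $\tau_{n-1}$ give you holomorphy of $\mathrm{Eis}$, not injectivity of $r$ on the whole $\sigma$-orbit. The paper fills this in as follows. First, \cite[Theorem~4.3]{FS} confines both ${}^\sigma\mathrm{Eis^{coh}}(\phi_f)$ and $\mathrm{Eis^{coh}}({}^\sigma\phi_f)$ to $\mathrm{H}^{b_{n-1}}\bigl(\mathfrak{g}_{n-1},K^0_{n-1,\infty};\mathcal{A}_{(\{P_{n-1}\},\pi,\chi_{n-2},\chi_{n-1})}\otimes E_\lambda\bigr)$. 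Second, since the cuspidal datum has no nontrivial automorphism, the Franke filtration on $\mathcal{A}_{(\{P_{n-1}\},\pi,\chi_{n-2},\chi_{n-1})}$ degenerates, and \cite[Theorem~14]{Franke} together with \cite[Theorem~4]{Grobner-residual} identify this space with $\mathrm{Ind}^{G_{n-1}}_{P_{n-1}}\bigl(\pi\otimes\chi\otimes S(\mathfrak{a}^G_{P_{n-1}})\bigr)$. Third, \cite[pp.~256--257]{Franke} then gives that $\mathrm{Eis}$ induces an isomorphism $\Sigma_f\otimes\mathrm{H}^{b_{n-1}}(\mathfrak{g}_{n-1},K^0_{n-1,\infty};\Sigma_\infty\otimes E_\lambda)\xrightarrow{\sim}\mathrm{H}^{b_{n-1}}(\mathfrak{g}_{n-1},K^0_{n-1,\infty};\mathcal{A}_{(\{P_{n-1}\},\pi,\chi)})$, so every class $\omega$ in this space is $\mathrm{Eis}(\Omega)$ for a unique $\Omega$ (up to coboundaries), and its cuspidal projection $\omega_{P_{n-1},(\pi\otimes\chi)}$ equals $\Omega$. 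Finally, an argument with the trivial subcomplex as in \cite[\S4.6]{Moeglin-00} shows that a nonzero $\Omega$ in the trivial subcomplex has nonzero class, hence $\omega_{P_{n-1}}\neq0$. This is precisely the injectivity of $r$ on the Galois-stable piece that you need; you should supply these ingredients rather than asserting the conclusion.
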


	\begin{proof}
		The claim in the proposition is equivalent to proving the commutativity of the diagram below for any $\sigma \in \mathrm{Aut}(\mathbb{C}/\overline{\mathbb{Q}})$.
		\[
		\begin{tikzcd}[column sep = 1.2in]
			\Sigma_f \arrow[d, "\sigma"'] \arrow[r, "p^{\mathrm{aut-B}}(\pi\text{,} \Sigma_{\infty})^{-1}\mathrm{Eis^{coh}}"] & \mathrm{H}^{b_{n-1}}(S_{n-1}, E_{\lambda}) \arrow[d, "\sigma"]\\
			\Sigma_f \arrow[r, "p^{\mathrm{aut-B}}(\pi\text{,} \Sigma_{\infty})^{-1}\mathrm{Eis^{coh}}"'] & \mathrm{H}^{b_{n-1}}(S_{n-1}, E_{\lambda})
		\end{tikzcd}
		\]
		To prove this we can consider the difference of the two composite arrows and show that it is identically zero. Note that \cite[Theorem 4.3]{FS} implies that for any $\sigma \in \mathrm{Aut}(\mathbb{C}/\overline{\mathbb{Q}})$,
		\[
		\mathrm{Eis^{coh}}(\phi^{\sigma}_f), (\mathrm{Eis^{coh}}(\phi_f))^{\sigma} \in \mathrm{H}^{b_{n-1}}\left(\mathfrak{g}_{n-1}, K_{n-1,\infty}^0; \mathcal{A}_{(\lbrace P_{n-1}\rbrace, \pi, \chi_{n-2}, \chi_{n-1})}(G_{n-1}(\mathbb{Q})\backslash G_{n-1}(\mathbb{A})) \otimes E_{\lambda}\right).
		\]
		By hypothesis the vector at infinity is annihilated by the intertwining operator, hence it follows as in \cite[\S 4.2, Page 82]{Harder-87} that the constant term along the parabolic $P_{n-1}$ of the differential form $\mathrm{Eis^{coh}}(\phi_f)$ is the differential form that we started with.
		That is
		\begin{equation} \label{eqn:rest-zero}
			\begin{split}
				\mathrm{res}_{P_{n-1}}([p^{\mathrm{aut-B}}(\pi, \Sigma_{\infty})^{-1}\mathrm{Eis^{coh}}(\phi^{\sigma}_f)] - [p^{\mathrm{aut-B}}(&\pi, \Sigma_{\infty})^{-1}\mathrm{Eis^{coh}}(\phi_f)]^{\sigma})\\
				&= [p^{\mathrm{aut-B}}(\pi, \Sigma_{\infty})^{-1}\iota_{\Sigma}(\phi^{\sigma}_f)] - [p^{\mathrm{aut-B}}(\pi, \Sigma_{\infty})^{-1}\iota_{\Sigma}(\phi_f)]^{\sigma}\\
				&= 0.
			\end{split}
		\end{equation}
		\noindent
		for any $\sigma \in \mathrm{Aut}(\mathbb{C}/\overline{\mathbb{Q}})$. The last equality follows from the $\mathrm{Aut}(\mathbb{C}/\overline{\mathbb{Q}})$-equivariance properties of the map $p^{\mathrm{aut-B}}(\pi, \Sigma_{\infty})^{-1}\iota_{\Sigma}$.\\
		
		\noindent
		To analyze further we need to understand the summand $\mathcal{A}_{\lbrace P_{n-1} \rbrace, \pi, \chi_{n-2}, \chi_{n-1}}$(See footnote \footnote{Here we have suppressed the notation $\mathcal{J}_{E_{\lambda}}$ for the annihilator ideal of the finite dimensional representation $E_{\lambda}$ for the action of the the center of universal enveloping algebra of $\mathfrak{g}_{n-1}$.}) of automorphic forms.
		Since the cuspidal datum $(\lbrace P_{n-1} \rbrace, \pi, \chi_{n-2}, \chi_{n-1})$ do not have any nontrivial automorphism, the Franke filtration on the summand $\mathcal{A}_{\lbrace P_{n-1} \rbrace, \pi, \chi_{n-2},  \chi_{n-1}}$ is degenerate.
		Using \cite[Theorem 14]{Franke} and its refinement \cite[Theorem 4]{Grobner-residual} we have the following (See also \cite{Waldspurger-d'apres-Franke}),
		\begin{equation} \label{eqn:description-aut-forms}
			\mathrm{Ind}^{G_{n-1}}_{P_{n-1}}\Big(\pi \otimes \chi \otimes S(\mathfrak{a}^G_{P_{n-1}})\Big) \overset{\sim}{\longrightarrow} \mathcal{A}_{\lbrace P_{n-1} \rbrace, \pi, \chi_{n-2}, \chi_{n-1}}
		\end{equation}
		
		\noindent
		Let $\Omega^*(\mathfrak{g}_{n-1}, K^0_{n-1,\infty}, \mathcal{A}_{\lbrace P_{n-1} \rbrace, \pi, \chi_{n-2}, \chi_{n-1}} \otimes E_{\lambda})$ denote the complex of differential forms with functions in the subspace $\mathcal{A}_{\lbrace P_{n-1} \rbrace, \pi, \chi_{n-2}, \chi_{n-1}}$.
		From here we argue as in \cite[\S 4.6]{Moeglin-00} to prove that a nonzero closed differential form up to coboundaries in the chain complex $\Omega^*(\mathfrak{g}_{n-1}, K_{n-1,\infty}, \mathcal{A}_{\lbrace P_{n-1} \rbrace, \pi, \chi_{n-2}, \chi_{n-1}}$ $\otimes E_{\lambda})$ restricts to a nonzero cohomology class on the boundary $\partial_{P_{n-1}}S_{n-1}$.
		Assuming that the above holds, the proof of the proposition is complete in light of \eqref{eqn:rest-zero}.\\
		
		\noindent
		Suppose $\omega$ is a closed differential form in $\Omega^*(\mathfrak{g}_{n-1}, K^0_{n-1,\infty}, \mathcal{A}_{\lbrace P_{n-1} \rbrace, \pi, \chi_{n-2}, \chi_{n-1}} \otimes E_{\lambda})$. Using the structure of $\mathcal{A}_{\lbrace P_{n-1} \rbrace, \pi, \chi_{n-2}, \chi_{n-1}}$ from \eqref{eqn:description-aut-forms} and \cite[pp. 256-257]{Franke}, we get an isomorphism induced by the map $\mathrm{Eis}$
		\[
		\Sigma_f \otimes \mathrm{H}^{b_{n-1}}(\mathfrak{g}_{n-1}, K^0_{n-1, \infty}, \Sigma_{\infty} \otimes E_{\lambda}) \xrightarrow{\sim} \mathrm{H}^{b_{n-1}}(\mathfrak{g}_{n-1}, K^0_{n-1, \infty}, \mathcal{A}_{\lbrace P_{n-1} \rbrace, \pi \otimes \chi})
		\]
		This implies that up to coboundaries there exists $\Omega \in \Omega^{b_{n-1}}(\mathfrak{g}_{n-1}, K^0_{{n-1},\infty}; \mathrm{Ind}^{G_{n-1}}_{P_{n-1}}(\pi \otimes \chi))$ such that
		\[
		\omega = \mathrm{Eis}(\Omega).
		\]
		Let $\omega_{P_{n-1}}$ be the constant term along the parabolic subgroup $P_{n-1}$ and we denote by $\omega_{P_{n-1}, (\pi \otimes \chi)}$ the cuspidal projection of the constant term along the representation $\pi \otimes \chi$. Note that we have $\omega_{P_{n-1}, (\pi \otimes \chi)} = \Omega$.
		Define the trivial subcomplex of $\Omega^*(\mathfrak{g}_{n-1}, K^0_{n-1,\infty}, \mathcal{A}_{\lbrace P_{n-1} \rbrace, \pi, \chi_{n-2}, \chi_{n-1}} \otimes E_{\lambda})$ to be the subcomplex on which the center of $\mathfrak{m}_{P_{n-1}}$ acts trivially under the following isomorphism
		\[
		\Omega^l(\mathfrak{g}_{n-1}, K^0_{{n-1},\infty}; \mathrm{Ind}^{G_{n-1}}_{P_{n-1}}(\pi \otimes \chi)) \simeq \bigoplus_{i+j =l} \Omega^i\left( \mathfrak{m}_{P_{n-1}}, (K^{P_{n-1}}_{n-1, \infty})^0; \pi \otimes \chi\otimes \rho_{P_{n-1}} \otimes \hom(\wedge^j\mathfrak{u}_{P_{n-1}}, E_{\lambda})\right).
		\]
		Recall that $\Omega$ can be chosen in the trivial subcomplex up to coboundaries. This assures us that if $\Omega$ is not zero then its class is not zero in $\mathrm{H}^{b_{n-1}}\Omega^*(\mathfrak{g}_{n-1}, K^0_{n-1, \infty};$ $\mathrm{Ind}^{G_{n-1}}_{P_{n-1}} (\pi \otimes \chi)\otimes E_{\lambda})$.
		Thus if $\Omega$ is nonzero, then $\omega_{P_{n-1}} \neq 0$. That is the restriction of the cohomology class $[\omega]$ to the face of a Borel-Serre compactification corresponding to $P_{n-1}$ is also nonzero.
		This completes the proof of the assertion in the paragraph above.
	\end{proof}

	\subsection{Rational structure on Whittaker models and Betti-Whittaker periods} \label{subsec:rat-Whittaker-Betti-periods}
	Let us fix an additive character $\psi : \mathbb{Q}\backslash \mathbb{A} \rightarrow \mathbb{C}$. Recall that cuspidal automorphic representations admit Whittaker models. Fix an isomorphism $\mathcal{F}_\Pi^{\mathrm{W-aut}} : \Pi_f \simeq \mathcal{W}(\Pi_f, \psi)$ given by the $\psi$-th Fourier coefficient. The Galois action on $\mathcal{W}(\Pi_f, \psi)$ is defined as in \cite[\S 4]{Mahnkopf-05}.
	Comparing the rational structure on $\mathcal{W}(\Pi_f, \psi)$ and $\Pi_f$ via the above isomorphism reveals that they differ by a complex number $p^{\mathrm{W-aut}}(\Pi_f)$. Consider the composite map where the first map is the inverse of $\mathcal{F}_\Pi^{\mathrm{W-aut}}$
	\begin{equation} \label{eqn:Whittaker-Betti}
		\mathcal{F}_{\Pi_f} : \mathcal{W}(\Pi_f, \psi) \simeq \Pi_f \xrightarrow{\iota_{\Pi}} \mathrm{H}^{b_n}(S_{n}, E_{\mu}).
	\end{equation}
	With $p(\Pi_f) = p^{\mathrm{W-aut}}(\Pi_f)p^{\mathrm{aut-B}}(\Pi_f)$, the following identity holds : $p(\Pi_f)\sigma\circ \mathcal{F}_{\Pi_f} = \sigma(p(\Pi_f)) \mathcal{F}_{\Pi_f} \circ \sigma$ for any $\sigma \in \mathrm{Aut}(\mathbb{C}/\overline{\mathbb{Q}})$.\\

	\noindent
	Similarly, let $\mathcal{F}^{\mathrm{W-aut}}_{\pi} : \pi_f \rightarrow \mathcal{W}(\pi_f, \psi^{-1})$ and $p^{\mathrm{W-aut}}(\pi_f)$ be defined as above for the cuspidal automorphic representations $\pi$ of the Levi $M_{P_{n-1}}$ of the parabolic subgroup $P_{n-1}$.
	Consider the composite where the first map is inverse of $(\mathcal{F}^{\mathrm{W-aut}}_{\pi} \times \mathrm{id})$
	\begin{equation}
		\mathcal{F}_{\Sigma_f} : \mathrm{Ind}(\mathcal{W}(\pi_f) \times \chi_f) \rightarrow \mathrm{Ind}(\pi_f \times \chi_f) \lhook\joinrel\xrightarrow{\mathrm{Eis^{coh}}} \mathrm{H}^{b_{n-1}}(S_{n-1}, E_{\lambda}).
	\end{equation}
	Let $\tilde{p}(\Sigma_f) = p^{\mathrm{aut-B}}(\pi, \Sigma_{\infty})p^{\mathrm{W-aut}}(\pi_f)$, and the induced representation is equipped with $\overline{\mathbb{Q}}$-structure as in \S \ref{subsubsec:rational-structure-global} then we have the equality $\tilde{p}(\Sigma_f)\sigma\circ \mathcal{F}_{\Sigma_f} = \sigma(\tilde{p}(\Sigma_f))\mathcal{F}_{\Sigma_f}\circ \sigma$ for any $\sigma \in \mathrm{Aut}(\mathbb{C}/\mathbb{Q})$.

	\section{Rankin--Selberg $L$-function for $G_n \times G_{n-1}$}
	\subsection{Analytic theory of Rankin--Selberg $L$-functions for $G_n \times G_{n-1}$} \label{subsec:5.1}
	For $t \in \mathbb{C}$ and $\Re(t)\gg 0$, we consider the induced representation $\Sigma_t = \Sigma \otimes \rho^t_{P_{n-1}}$, where $\Sigma$ is defined in \eqref{eqn:def-Sigma}.
	For $\Re(t)\gg 0$ the Eisenstein series is well-defined due to generalities, let $V_{\Sigma_t} = \mathrm{Eis}(\Sigma_t)$.\\
	
	\noindent
	We will apply the Rankin--Selberg theory of $L$-functions for $G_n \times G_{n-1}$ to the pair $(\Pi, \Sigma_t)$, where  $(\Pi, V_{\Pi})$ is a cuspidal automorphic representation of $G_n(\mathbb{A})$ and $(\Sigma_t, V_{\Sigma_t})$ is the induced representation.
	Let $\phi_{\Pi} \in V_{\Pi}$ and $\phi_{\Sigma_t} \in V_{\Sigma_t}$, then $\phi_{\Pi}$ and $\mathrm{Eis}(\phi_{\Sigma_t})$ are automorphic forms on the group $G_{n}$ and $G_{n-1}$, respectively. Recall that $\iota$ denotes the embedding of $G_{n-1}$ in $G_n$ in the upper left corner. Consider the global period integral
	
	\begin{equation} \label{eqn:period-integral}
		I(s, \phi_\Pi, \mathrm{Eis}(\phi_{\Sigma_t})) \colonequals \underset{G_{n-1}(\mathbb{Q}) \backslash G_{n-1}(\mathbb{A})}{\int} \phi_\Pi(\iota(g)) \mathrm{Eis}(\phi_{\Sigma_t})(g)|\det(g)|^{s- \frac{1}{2}}dg. 
	\end{equation}
	This integral converges for $\Re(s) = \frac{1}{2}$ since a cusp form decays rapidly whereas an Eisenstein series (or its residue) is of moderate growth. For a nontrivial additive character $\psi : \mathbb{Q} \backslash \mathbb{A} \to \mathbb{C}$, suppose that $w_\Pi \in \mathcal{W}(\Pi_f, \psi)$, and $w_{\Sigma_t} \in \mathcal{W}((\Sigma_t)_f, \psi^{-1})$ are global Whittaker vectors corresponding to $\phi_\Pi$ and $\mathrm{Eis}(\phi_{\Sigma_t}),$ respectively.
	Moreover assume that $\phi_\Pi$ and $\phi_{\Sigma_t}$ are chosen so that $w_\Pi = \otimes' w_{\Pi_v}$ and $w_{\Sigma_t} = \otimes' w_{\Sigma_{t,v}},$ are pure tensors. The Euler factorization of the integrand leads to a similar factorization of the above period integral. Finally we have 
	\begin{align*}
		I\left(\frac{1}{2}, \phi_\Pi, \mathrm{Eis}(\phi_{\Sigma_t})\right) &\colonequals \underset{{U_{n-1}(\mathbb{A})\backslash \mathrm{GL}(n-1,\mathbb{A})}}{\int} w_\Pi(\iota(g)) w_{\Sigma_t}(g)dg\\
		&=\prod_v \underset{{U_{n-1}(\mathbb{Q}_v)\backslash \mathrm{GL}(n-1,\mathbb{Q}_v)}}{\int} w_{\Pi_v}(\iota(g_v)) w_{\Sigma_{t,v}}(g_v) dg_v\\
		&\equalscolon \prod_v \Psi_v\left(\frac{1}{2}, w_{\Pi_v}, w_{\Sigma_{t,v}}\right).
	\end{align*}
	See \cite{Cogdell} for more details. In the next section, we pin down the choices of $w_{\Pi}$ and $w_{\Sigma_t}$, and compute the local integrals $\Psi(s, w_{\Pi_v}, w_{\Sigma_{t,v}})$ at unramified and ramified places. Since we are only concerned with $L$-values up to a nonzero algebraic number, we need not compute the local integrals at ramified places but only show that they are a nonzero algebraic number.

	\subsection{Choice of local Whittaker vectors and local Rankin--Selberg integrals} \label{subsec:choice-of-local-Whittaker-vectors-local-integral}
	\subsubsection{Two lemmas}
	It is known from \cite{Casselman-Shalika-II} that $\Sigma_{t,v}$ admits a Whittaker model for all $\Re(t) \gg 0$.
	The choices of local Whittaker vectors at places which are unramified for $\Sigma_t$ is dictated by the work of Jacquet-Piateski-Shapiro-Shalika \cite{Jacquet-Shapiro-Shalika}. Although the choices of Whittaker vectors at places which are ramified for $\Sigma_t$ is not clear, it is possible (see lemma below) to choose vectors which under the local Rankin--Selberg integral is a nonzero algebraic number.
	This suffices for our purpose of studying algebraic aspects of the theory of $L$-functions and is stated in the lemma below.

	\begin{lemma} \label{lemma:local-Whittaker-choices-ramified}
		Let $F$ be a non-archimedean local field and $\psi_F$(See footnote \footnote{This notation is used to differentiate from the notation of the global additive character $\psi$ used previously.})
		an additive character of $F$. Let $\pi_1$ and $\pi_2$ be two representations of $G_n(F)$ and $G_{n-1}(F)$, respectively which admit Whittaker models. Assume that the Whittaker models are furthermore defined over $\overline{\mathbb{Q}}$. There exists $w_1 \in W(\pi_1, \psi_F)$ and $w_{2} \in W(\pi_2, \psi^{-1}_F)$ satisfying the following conditions
		\begin{enumerate}
			\item The functions $w_1$ and $w_2$ are $\overline{\mathbb{Q}}$-valued.
			\item The local Rankin--Selberg integral
			\[
			\Psi(s, w_1, w_2) = \underset{U_{n-1}(F)\backslash \mathrm{GL}_{n-1}(F)}{\int} w_1(g)w_2(g)|\det(g)|^{s-\frac{1}{2}}dg
			\]
			is given by $P_{n-1}(q^{-s})$ where $P_{n-1}(T) \in \overline{\mathbb{Q}}[T, T^{-1}]$.
		\end{enumerate}
	\end{lemma}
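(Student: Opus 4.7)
The strategy is to choose Whittaker vectors whose supports are so tightly controlled that the Rankin--Selberg integral degenerates to an $s$-independent constant. The key tool is the Bernstein--Zelevinsky theorem on the Kirillov model: for an irreducible generic representation $\pi_1$ of $G_n(F)$, the restriction of the Whittaker model $\mathcal{W}(\pi_1, \psi)$ to the mirabolic subgroup $P_n(F)$ contains the space $\mathrm{ind}^{P_n(F)}_{U_n(F)}(\psi)$ of Schwartz (compactly supported modulo $U_n$), $\psi$-equivariant functions as a $P_n(F)$-subrepresentation. Because this Schwartz subspace is canonically characterized as the unique irreducible $P_n(F)$-subrepresentation, it inherits the $\overline{\mathbb{Q}}$-structure from $\mathcal{W}(\pi_1, \psi)$, and I may freely prescribe $w_1|_{P_n(F)}$ to be any $\overline{\mathbb{Q}}$-valued Schwartz function.

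Concretely, I would fix a compact open subgroup $K_0 \subset G_{n-1}(\mathcal{O}_F)$ small enough that $\psi$ is trivial on $\iota(U_{n-1}(F) \cap K_0)$ and that $|\det g|_F = 1$ for all $g \in K_0$. Then I choose $w_1$ so that $w_1|_{P_n(F)}$ is supported on $U_n(F) \cdot \iota(K_0)$ and takes value $1$ on $\iota(K_0)$. Separately, I pick a nonzero $\overline{\mathbb{Q}}$-valued Whittaker function $w_2 \in \mathcal{W}(\pi_2, \psi^{-1})$ with $w_2(1) \neq 0$; such a vector exists since $w \mapsto w(1)$ is a nonzero $\overline{\mathbb{Q}}$-rational linear functional on $\mathcal{W}(\pi_2, \psi^{-1})$. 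Shrinking $K_0$ further if needed, I ensure that $w_2$ is right $K_0$-invariant as well.

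With these choices, the integrand of $\Psi(s, w_1, w_2)$ is supported on the image of $K_0$ in $U_{n-1}(F)\backslash G_{n-1}(F)$, on which $|\det g|_F^{s-1/2} \equiv 1$, and the product $w_1(\iota(k))\,w_2(k) = w_2(1)$ is an $s$-independent constant in $\overline{\mathbb{Q}}^{\times}$. Normalizing Haar measures $\overline{\mathbb{Q}}$-rationally, say $\mathrm{vol}(G_{n-1}(\mathcal{O}_F)) = 1$, I obtain
\[
\Psi(s, w_1, w_2) = w_2(1) \cdot \mathrm{vol}\bigl((U_{n-1}(F) \cap K_0) \backslash K_0\bigr) \in \overline{\mathbb{Q}}^{\times},
\]
a nonzero constant which is a degree-zero Laurent polynomial $P(q^{-s}) \in \overline{\mathbb{Q}}[T, T^{-1}]$, as required.

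The main technical subtlety---compatibility of the Bernstein--Zelevinsky Schwartz subspace with the chosen $\overline{\mathbb{Q}}$-structure on $\mathcal{W}(\pi_1, \psi)$---is essentially worked out in the analysis of Grobner--Harris. It follows from the facts that $\psi$ takes values in roots of unity (so $\mathrm{ind}^{P_n(F)}_{U_n(F)}(\psi)$ itself is defined over $\overline{\mathbb{Q}}$), and that the Schwartz subspace is canonically characterized inside the Kirillov model, hence is preserved by the Galois action coming from the $\overline{\mathbb{Q}}$-structure on $\mathcal{W}(\pi_1, \psi)$.
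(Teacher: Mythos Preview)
Your argument is correct and follows essentially the same strategy as the paper: invoke the Kirillov model (you phrase it via Bernstein--Zelevinsky, the paper via Gelfand--Kazhdan \cite{Gelfand-Kazhdan-72}) to prescribe the restriction $w_1|_{G_{n-1}(F)}$ to have small compact support modulo $U_{n-1}$, so that the zeta integral collapses to a single term. The paper's execution differs only in that it takes $w_2$ to be the essential (new) vector for $\pi_2$, fixed under a congruence subgroup $K_{n-1}$ of mirabolic type, locates a torus element $t_0\in T^{+}_{n-1}(F)$ with $w_2(t_0)\neq 0$, and centers the support of $w_1$ at $t_0$ rather than at the identity; this produces $\Psi(s,w_1,w_2)\in |\det t_0|^{s-1/2}\,\mathrm{vol}(K_{n-1})\,\overline{\mathbb{Q}}$, a monomial in $q^{-s}$ rather than your constant. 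Your decision to work at the identity and to take any $\overline{\mathbb{Q}}$-rational $w_2$ with $w_2(1)\neq 0$ is a mild simplification that bypasses the new-vector theory for $\pi_2$ altogether.
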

	\begin{proof}
		The lemma is a consequence of \cite{Jacquet-Shapiro-Shalika} as shown in \cite[Lemma 3.7]{Grobner-Harris} and \cite{Mahnkopf-05}. Only the case when $\pi_2$ is ramified requires explanation. Let $w_{\pi_2}$ be the unique vector fixed under a mirabolic subgroup $K_{n-1}$ of some level.
		Since $W(\pi_2, \psi^{-1}_F)$ is defined over $\overline{\mathbb{Q}}$ and the space of $K_{n-1}$-invariant vectors in the complex vector space $W(\pi_2, \psi^{-1}_F)$ is one dimensional, hence $w_{\pi_2}$ is defined over $\overline{\mathbb{Q}}$.
		Then using the right $K_{n-1}$ and left $U_{n-1}(F)$ invariance properties of the vector $w_{\pi_2}$, we may assume that $w_{\pi_2}$ does not vanish on some $t_0 \in T_{n-1}(F)$.
		Note that $K_{n-1}$ is a mirabolic subgroup and hence the vector $w_{\pi_2}$ is right $U_{n-1}(\mathcal{O}_F)$-invariant as well. Consequently, the vector $w_{\pi_2}$ is only supported on the subset
		\[
		T^+_{n-1}(F) \colonequals \lbrace t = \diag(t_1, t_2, \dots, t_{n-1}) \suchthat t_it^{-1}_{i+1} \in \mathcal{O}_F \text{ for all } i \rbrace.
		\]
		Let $w_{\pi_2}$ be a Whittaker function as described above and renormalized such that $w_{\pi_2}(t_0) =1$. Choose $w_{\pi_1}$ to be the unique function such that its restriction to $G_{n-1}(F)$ is supported on $U_{n-1}(F)t_0K_{n-1}$ and is given by $w_{\pi_1}(ut_0k) = \psi(u)$.
		The existence of such a function is guaranteed by \cite[Theorem 5]{Gelfand-Kazhdan-72}. Arguing as before the uniqueness of the function $w_{\pi_1}$ shows that the vector $w_{\pi_1}$ is defined over $\overline{\mathbb{Q}}$. Computing the integral explicitly we get that
		\[
		\Psi_v(s, w_{\pi_1}, w_{\pi_2}) = \underset{U_{n-1}(F)\backslash U_{n-1}(F)tK_{n-1}}{\int}| w_{\pi_2}(t)\det(g)|^{s-\frac{1}{2}} dg \in |\det(t)|^{s-\frac{1}{2}}\mathrm{vol}(K_{n-1})\overline{\mathbb{Q}}.
		\]
		The lemma follows.
	\end{proof}
	
	\noindent
	Recall the following commutative diagram from \cite[\S 1.2]{Mahnkopf-05}
	\begin{equation} \label{eqn:comm-diagram-indWhittaker-Whittakerinduced}
		\begin{tikzcd}[column sep = 1in]
			\mathrm{Ind}(\mathcal{W}(\pi)\times \chi) \arrow[d, "\mathcal{F}^{\textup{loc}}"'] & \mathrm{Ind}(\pi\times \chi) \arrow[d, "\textup{Eis}"] \arrow[l, "\mathcal{F}^{\mathrm{W-aut}}_{\pi}"']\\
			\mathcal{W}(\Sigma) & \Sigma \arrow[l, "\mathcal{F}^{\mathrm{W-aut}}_{\Sigma}"],
		\end{tikzcd}
	\end{equation}
	where $\mathcal{F}^{\mathrm{loc}}$ is defined as follows:
	\[
	\mathcal{F}^{\mathrm{loc}}(f^{\mathcal{W}})(g) = \prod_v \mathcal{F}_v(f^{\mathcal{W}}_v)(g) \colonequals \prod_v \underset{(U^{-}_P)^{w_0w_P}}{\int} f_v^{\mathcal{W}}(w_Pw_0ug)\psi_v(u) du.
	\]

	\noindent
	We will need the following lemma in \S \ref{subsubsec:Whittaker-vector-choices-list}.
	
	\begin{lemma} \label{lemma:intertwiner-rational-whittaker-model}
		Let $F$ be a non-archimedean field, $\pi$ be an irreducible admissible generic representation of $G_{n-1}(F)$ defined over $\overline{\mathbb{Q}}$ and $\chi$ a character of $F^*$ valued in $\overline{\mathbb{Q}}$.
		Assume that $\mathrm{Ind}(\pi \times \chi)$ is irreducible. Then the map
		\[
		\mathcal{F} : \mathrm{Ind}\left( \mathcal{W}(\mathcal{\pi}) \times \chi\right) \rightarrow \mathcal{W}\left( \mathrm{Ind}(\pi \times \chi)\right),
		\]
		described in the above equation is defined over $\overline{\mathbb{Q}}$. Here $\mathrm{Ind}$ denotes the normalized induction functor.
	\end{lemma}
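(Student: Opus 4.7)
The plan is to reduce the claim to a single rationality check on a convenient test vector, using the uniqueness of the Whittaker model. Since $\mathrm{Ind}(\pi \times \chi)$ is irreducible, its Whittaker model $\mathcal{W}(\mathrm{Ind}(\pi \times \chi))$ is also irreducible, and by uniqueness of the Whittaker functional the space of $G_{n-1}(F)$-equivariant maps from $\mathrm{Ind}(\mathcal{W}(\pi) \times \chi)$ to $\mathcal{W}(\mathrm{Ind}(\pi \times \chi))$ is at most one-dimensional. Both source and target carry natural $\overline{\mathbb{Q}}$-rational structures: on the source, inherited from those on $\mathcal{W}(\pi)$ and $\chi$ via the induction conventions of \S \ref{subsubsec:rational-structure-global} (noting that $\delta_P^{1/2}$ takes values in $\overline{\mathbb{Q}}$ since $|\cdot|_F$ takes values in $\mathbb{Q}$); on the target, from the $\overline{\mathbb{Q}}$-rational admissible representation $\mathrm{Ind}(\pi \times \chi)$ viewed through its Whittaker model. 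A Schur-lemma argument for $\overline{\mathbb{Q}}$-rational representations thus yields a (unique up to $\overline{\mathbb{Q}}^\times$) $\overline{\mathbb{Q}}$-rational intertwiner $\mathcal{F}_0$, and $\mathcal{F} = c \cdot \mathcal{F}_0$ for some $c \in \mathbb{C}^\times$. It therefore suffices to exhibit one $\overline{\mathbb{Q}}$-rational $f_0^{\mathcal{W}}$ with $\mathcal{F}(f_0^{\mathcal{W}})$ nonzero and $\overline{\mathbb{Q}}$-rational.

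For the test vector I would take a Bruhat-cell construction. Pick a sufficiently small compact open subgroup $K_0 \subset G_{n-1}(F)$ such that $P \cap (w_P w_0) K_0 (w_P w_0)^{-1} = \{1\}$, and pick a $\overline{\mathbb{Q}}$-rational $W_0 \in \mathcal{W}(\pi)$. Define $f_0^{\mathcal{W}}$ to be supported on the single double coset $P \cdot w_P w_0 \cdot K_0$, with $f_0^{\mathcal{W}}(p \cdot w_P w_0 \cdot k) = \delta_P^{1/2}(p)\,(\pi \otimes \chi)(p) W_0$ for $p \in P$, $k \in K_0$. This vector is manifestly $\overline{\mathbb{Q}}$-rational. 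Substituting into the integral
\[
    \mathcal{F}(f_0^{\mathcal{W}})(g) \;=\; \int_{(U_P^-)^{w_0 w_P}} f_0^{\mathcal{W}}(w_P w_0 u g)\, \psi(u)\, du,
\]
the support condition on $f_0^{\mathcal{W}}$ forces $u$ to lie in the compact subset $\{u \in (U_P^-)^{w_0 w_P} : w_P w_0 u g \in P \cdot w_P w_0 \cdot K_0 \}$, and the integrand is locally constant there by smoothness. Hence the integral reduces to a finite $\overline{\mathbb{Q}}$-linear combination of $\overline{\mathbb{Q}}$-valued quantities (values of $W_0$, $\chi$, $\psi$, and $\delta_P^{1/2}$) weighted by rational volumes of compact open subgroups, and is therefore $\overline{\mathbb{Q}}$-valued. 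Nonvanishing for some such choice of $(W_0, K_0)$ is assured since $\mathcal{F}$ is a nonzero intertwiner and Bruhat-cell test vectors generate a dense subspace of the source.

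The principal obstacle to sidestep is the absolute convergence of the Jacquet integral in general position: in the standard treatment it converges only for the inducing parameter $\chi$ in a certain cone, and otherwise must be continued meromorphically. The Bruhat-cell support trick bypasses this altogether, since on the chosen test vector the integrand is compactly supported by construction, so the integral is a manifest finite sum with no analytic continuation required. Under standard normalization conventions ($\psi$ valued in roots of unity, Haar measure assigning rational volumes to compact open subgroups), the algebraicity of $\mathcal{F}(f_0^{\mathcal{W}})$ follows immediately, and the uniqueness argument then upgrades this to the $\overline{\mathbb{Q}}$-rationality of $\mathcal{F}$ itself.
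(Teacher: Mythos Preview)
Your approach is essentially the same as the paper's: both arguments reduce via Schur's lemma (any two intertwiners between irreducible admissible $\overline{\mathbb{Q}}$-rational representations differ by a scalar) to checking that the scalar is algebraic by evaluating $\mathcal{F}$ on a single well-chosen rational test vector. The paper quotes Mahnkopf's explicit computation with the essential vector $w_\pi$ and the specific section $f^{\mathcal{W}}$ of \cite[(1.6),(1.7)]{Mahnkopf-05}, showing that $\mathcal{F}(f^{\mathcal{W}})$ evaluated at a suitable torus element lies in $\overline{\mathbb{Q}}$; you instead use a Bruhat-cell supported section so that the Jacquet integral collapses to a finite sum. Your route is more self-contained and makes no appeal to the new-vector theory; the paper's route ties directly into the specific vectors already used elsewhere in \S\ref{subsec:7.1}.

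One technical slip: the condition $P \cap (w_P w_0) K_0 (w_P w_0)^{-1} = \{1\}$ cannot hold for any compact open $K_0$, since every open neighbourhood of the identity meets $P$ nontrivially. What you actually need is that this intersection act trivially on $W_0$ through $\delta_P^{1/2}(\pi\otimes\chi)$, which is automatic for $K_0$ small enough by smoothness of $W_0$. Alternatively, work directly on the open Bruhat cell: the multiplication map $P \times (U_P^-)^{w_0 w_P} \to P\, w_P w_0\, (U_P^-)^{w_0 w_P}$ is a bijection, so you may define $f_0^{\mathcal{W}}$ by prescribing its restriction to $w_P w_0 \cdot (U_P^-)^{w_0 w_P}$ to be $W_0$ times the characteristic function of a compact open subset, and extend by left $P$-equivariance. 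This sidesteps the issue and makes the compact support of the integrand in $u$ immediate. With this correction your argument goes through.
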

	\begin{proof}
		This is simply a restatement of \cite[Prop. 1.4.2, Prop. 1.4.4]{Mahnkopf-05}. Let $w_{\pi}$ be an essential vector in $\pi$ and $t = \diag(t_1, \dots, t_{n-2}) \in T_{n-2}(F)$ such that $w_{\pi}(t) = 1$. Define the Whittaker vector $f^{\mathcal{W}} \in \mathrm{Ind}(\mathcal{W}(\pi) \times \chi)$ as in \cite[(1.6), (1.7)]{Mahnkopf-05}.
		With this definition, \cite[Prop. 1.4.2, Prop. 1.4.4]{Mahnkopf-05} implies that the Whittaker vector $\mathcal{F}(f^{\mathcal{W}})$ evaluated at $\diag(1, t) \in T_{n-1}(F)$\footnote{The vector $\mathcal{F}(f^{\mathcal{W}})$ is nonzero only on $T^+_{n-1}(F)$. We may choose $t^{-1}_1 \in \mathcal{O}_F$ since $\pi$ admits a central character.} belongs to $\overline{\mathbb{Q}}$.
		The choice of the vector $f^{\mathcal{W}}$ implies that $f^{\mathcal{W}}(t) \in \overline{\mathbb{Q}}^{\times}$ for any element $t$ in the torus.
		Observe that we are in the following setup: There are two possibly different rational structure on $\mathrm{Ind}(\pi \times \chi)$ one coming from the Whittaker model and the other as rational structure induced from Whittaker model of inducing datum, $\mathcal{F}$ is a morphism such that $\mathcal{F}(f^{\mathcal{W}})(t) \in \overline{\mathbb{Q}}$.
		Since $\mathrm{Ind}(\pi \times \chi)$ is irreducible and admissible, $\mathcal{F}$ is a scalar. The value of this scalar is given by the ratio $\mathcal{F}(f^{\mathcal{W}})(t)/f^{\mathcal{W}}(t)$ for some $t$.
		This is an algebraic number as is argued above. Hence the lemma.
	\end{proof}

	\subsubsection{Choices for Whittaker vectors} \label{subsubsec:Whittaker-vector-choices-list}
	Equipped with this lemma and the theory of new vector of Jacquet--Piatetski-Shapiro--Shalika, we may now make the choices of the local Whittaker vectors.
	Let $S_{\chi_i}$ (resp. $S_\pi$, $S_{\psi}$) be the set of finite places of $\mathbb{Q}$ where $\chi_i$ (resp. $\pi$, $\psi$) is ramified; then put $S_{\Sigma} = S_{\pi} \cup S_{\chi_{n-2}} \cup S_{\chi_{n-1}}$. Furthermore we assume that $\chi_{n-2,v} \neq \chi_{n-1,v}$ for any $v \in  S_\Sigma$.
.
	\begin{enumerate}
		\item $v \notin S_{\Sigma}\cup \lbrace \infty \rbrace$. Let $w_{\Pi_v}$ be the new vectors in the Whittaker model of $\Pi_v$. Let $w_{\pi, v}$ be the new vector in $\pi_v$, then for $nmk \in N(\mathbb{Q}_v)M(\mathbb{Q}_v)G_{n-1}(\mathbb{Z}_v)$, define
		\[
		f^{\mathcal{W}}_{t,v}(nmk) = \rho^{t+1}_{P_{n-1}, v}(m)w^0_{\pi, v}(m) \chi_v(m).
		\]
		Let $\phi_{\Pi_v} \in \Pi_v$ be the unique element which maps $w_{\Pi_v}$ under the map $\mathcal{F}_\Pi^{\mathrm{W-aut}}$. Similarly, let $\phi_{\Sigma_{t,v}}$ to be the vector that maps under the map $\mathcal{F}_\pi^{\mathrm{W-aut}} \times id$ to $f^{\mathcal{W}}_{t, v}$ (See \eqref{eqn:comm-diagram-indWhittaker-Whittakerinduced}). When $t=0$, we denote $f^{\mathcal{W}}_{t,v}(nmk)$ simply by $f^{\mathcal{W}}_{v}(nmk)$.
			Since $w_{\pi, v}$ is a new vector, it is defined over $\overline{\mathbb{Q}}$. Therefore, it follows that $f^{\mathcal{W}}_v$ is defined over $\overline{\mathbb{Q}}$.
		
		\item $v \in S_{\Sigma}$. Let $w_{\Pi_v}$ and $w_{\Sigma_{t, v}}$ be the vectors in the Whittaker models of $\Pi_v$ and $\Sigma_v$ respectively given by the Lemma \ref{lemma:local-Whittaker-choices-ramified} for $\pi_1 = \Pi_v$, $\pi_2 = \Sigma_{t,v}$. The local Rankin--Selberg integral in this case is a nonzero algebraic number.
		Let $\phi_{\Pi_v} \in \Pi_v$ be the unique element which maps $w_{\Pi_v}$ under the map $\mathcal{F}_\Pi^{\mathrm{W-aut}}$.
		Let $\phi_{\Sigma_{t,v}}$ be the unique element in $\Sigma_{t,v}$ that is mapped to $w_{\Sigma_{t,v}}$ under the map $\mathcal{F}_v\circ (\mathcal{F}_\pi^{\mathrm{W-aut}} \times id)$. When $t=0$, we denote $w_{\Sigma_{t,v}}$ by $w_{\Sigma_v}$. Let $f^{\mathcal{W}}_v$ be the unique element in $\mathrm{Ind}(\mathcal{W}(\pi_v) \times \chi_v)$ which maps to $w_{\Sigma_v}$ under the map $\mathcal{F}_v$.
		By Lemma \ref{lemma:intertwiner-rational-whittaker-model}, $f^{\mathcal{W}}_v$ is defined over $\overline{\mathbb{Q}}$. Note that the assumption $\chi_{n-2,v} \neq \chi_{n-1,v}$ for any $v \in  S_\Sigma$
is required here to ensure that the representation $\mathrm{Ind}(\pi_v \times \chi_v)$ is
irreducible.
		
		\item $v = \lbrace \infty \rbrace$. Let $w_{\Pi_{\infty}}$ and $w_{\Sigma_{t, \infty}}$ be two nonzero vectors such that the local Rankin--Selberg integral $\Psi_{\infty}(\frac{1}{2}, w_{\Pi_{\infty}}, w_{\Sigma_{t, \infty}})$ is nonzero (See  \S \ref{subsubsec:critical-point}).
	\end{enumerate}

	\subsection{Global Whittaker vectors and Rankin--Selberg integral} \label{subsec:global-whittaker-global-integral}
		Let $\phi_{\Pi} = \otimes_v \phi_{\Pi_v}$ and $\phi_{\Sigma_t} = \otimes_v \phi_{\Sigma_{t,v}}$, where $\phi_{\Pi_v}$ and $\phi_{\Sigma_{t,v}}$ are chosen as in the previous paragraph above.
		In the paragraph below, we denote by $w_{\Sigma_t}$ the vector obtained by taking $\psi^{-1}$ Fourier coefficient of $\phi_{\Sigma_{t}} = \otimes'\phi_{\Sigma_{t,v}}$.
        The commutative diagram \eqref{eqn:comm-diagram-indWhittaker-Whittakerinduced} implies that $w_{\Sigma_t} = \otimes'w_{\Sigma_{t,v}}$. Then,
		\begin{align*}
			I\left(\frac{1}{2}, \phi_\Pi, \mathrm{Eis}(\phi_{\Sigma_t})\right) &= \underset{{U_{n-1}(\mathbb{A})\backslash \mathrm{GL}(n-1,\mathbb{A})}}{\int} w_\Pi(\iota(g)) w_{\Sigma_t}(g)dg\\
			&=\prod_v \underset{{U_{n-1}(\mathbb{Q}_v)\backslash \mathrm{GL}(n-1,\mathbb{Q}_v)}}{\int} w_{\Pi_v}(\iota(g_v)) w_{\Sigma_{t,v}}(g_v) dg_v\\
			&\equalscolon \prod_{v\in S_{\Sigma}} \Psi_v\left(\frac{1}{2}, w_{\Pi_v}, w_{\Sigma_{t,v}}\right) \cdot \prod_{v\notin S_{\Sigma}\cup \{\infty\}} \Psi_v\left(\frac{1}{2}, w_{\Pi_v}, w_{\Sigma_{t,v}}\right)\Psi_\infty\left(\frac{1}{2}, w_{\Pi_\infty}, w_{\Sigma_{t, \infty}}\right).
		\end{align*}
		For the second equality we use the fact that for $\Re(t)\gg 0$, the Fourier coefficient of Eisenstein series of pure tensors are pure tensors \cite[Prop. 7.1.3]{Shahidi-10}.
		For $v \notin S_{\Sigma}\cup \{\infty\}$, the vectors $w_{\Sigma_{t, v}}$ are a scalar multiple of spherical vectors and the scalar is given by inverse of
		\begin{equation} \label{eqn:definition-of-C}
			\begin{split}
				C_v(t) &= \underset{i<j}{\prod}L(1+2t, \pi_{i,v}\times \pi^{\vee}_{j,v}) \underset{1\leq i \leq (n-3)/2}{\prod}L(1+ (n-\tfrac{3}{2}+2i)t - \mu_{\lfloor \frac{n}{2}\rfloor} - \tfrac{1}{2}, \pi_{i,v} \times \chi^{-1}_{n-2,v})\\
				& \qquad \underset{1\leq i \leq (n-3)/2}{\prod} L(1+ (n-\tfrac{1}{2}+2i)t + \mu_{\lfloor \frac{n}{2}\rfloor} + \tfrac{1}{2}, \pi_{i,v} \times \chi^{-1}_{n-1,v}) L(1+t+ 2\mu_{\lfloor \frac{n}{2}\rfloor} +1, \chi_{n-2,v}\chi^{-1}_{n-1,v}).
			\end{split}
		\end{equation}
		Therefore for $v \notin S_{\Sigma} \cup \{\infty\}$, we get $$ \Psi_v\left(\frac{1}{2}, w_{\Pi_v}, w_{\Sigma_{t,v}}\right) = \frac{L(\frac{1}{2}, \Pi_v, \Sigma_{t,v})}{C_v(t)}.$$
		Now for $\Re(t)\gg 0$ and for any subset $S$, let $C^S(t) = \prod_{v\notin S \cup \{\infty\}} C_v(t)$, and $C(t) = C^{\lbrace \emptyset\rbrace}(t)$. Put $C^S = C^S(0)$ and $C = C(0)$. Putting all this together, we get for $\Re(t)\gg 0$
		\begin{align*}
			I\left(\frac{1}{2}, \phi_\Pi, \mathrm{Eis}(\phi_{\Sigma_t})\right) &= \frac{L^{S_\Sigma}(\frac{1}{2}, \Pi\times \Sigma_{t})\prod_{v\in S_{\Sigma}\cup \{\infty\}} \Psi_v\left(\frac{1}{2}, w_{\Pi_v}, w_{\Sigma_{t,v}}\right)}{C^{S_\Sigma}(t)}.
		\end{align*}
		Note that both the sides of the equation are well defined for $t=0$, and therefore we get the above equality holds for $\Sigma$
		\[
		I\left(\frac{1}{2}, \phi_\Pi, \mathrm{Eis}(\phi_{\Sigma})\right) = \frac{L^{S_\Sigma}(\frac{1}{2}, \Pi\times \Sigma_{})\prod_{v\in S_{\Sigma}\cup \{\infty\}} \Psi_v\left(\frac{1}{2}, w_{\Pi_v}, w_{\Sigma_{v}}\right)}{C^{S_\Sigma}}.
		\]
		Since the local $L$-function evaluated at $\frac{1}{2}$ is a nonzero algebraic number (due to generalities) and local Whittaker vectors at ramified places were chosen to ensure that the local Rankin--Selberg integral at ramified places are nonzero algebraic number(See \S \ref{subsec:choice-of-local-Whittaker-vectors-local-integral}) we may rewrite the above as
		\begin{equation} \label{eqn:integral-as-L-function}
			I\left(\frac{1}{2}, \phi_\Pi, \mathrm{Eis}(\phi_{\Sigma})\right) \sim_{\overline{\mathbb{Q}}^{\times}} \frac{L(\frac{1}{2}, \Pi\times \Sigma)\Psi_\infty\left(\frac{1}{2}, w_{\Pi_\infty}, w_{\Sigma_{\infty}}\right)}{C},
		\end{equation}
		where the symbol $\sim_{\overline{\mathbb{Q}}^{\times}}$ denotes the equality up to a nonzero algebraic number.

	\section{Proof of the Theorem \ref{thm:main-theorem}}

	\subsection{The cohomology classes}
	Define the cohomology class attached to the global Whittaker vector $w_{\Pi_f}$ as follows, 
	\begin{equation}
		\vartheta_{\Pi} \colonequals p(\Pi_f)^{-1}\mathcal{F}_{\Pi_f}(w_{\Pi_f}). 
	\end{equation}
	Since the map $p(\Pi_f)^{-1}\mathcal{F}_{\Pi_f}$ is $\mathrm{Aut}(\mathbb{C}/\overline{\mathbb{Q}})$-equivariant (See \S \ref{subsec:rat-Whittaker-Betti-periods}) and $w_{\Pi_f} \in \mathcal{W}(\Pi_f, \psi)$ is defined over $\overline{\mathbb{Q}}$ due to the choice mentioned above, we get $\vartheta_{\Pi} \in \mathrm{H}^{b_n}(S_n, E_{\mu, \overline{\mathbb{Q}}})$.
	By construction $\vartheta_{\Pi} \in \mathrm{H}^{b_n}_{cusp}(S_n, E_{\mu}) \subset \mathrm{H}^{b_n}(S_n, E_{\mu})$ and cuspidal cohomology is naturally a subspace of the cohomology with compact support, we get $\vartheta_{\Pi} \in \mathrm{H}^{b_n}_{\text{c}}(S_n, E_{\mu, \overline{\mathbb{Q}}})$.\\

	\noindent
	Let $f^{\mathcal{W}} = \otimes f^{\mathcal{W}}_v \in \mathrm{Ind}(\mathcal{W}(\pi) \times \chi)$ be the vector defined in \S \ref{subsec:choice-of-local-Whittaker-vectors-local-integral}.
	The vector $f$ is defined over $\overline{\mathbb{Q}}$ since $f^{\mathcal{W}}_{v}$ are defined over $\overline{\mathbb{Q}}$ (See \S \ref{subsubsec:Whittaker-vector-choices-list}(1),(2)).
	Recall from \S \ref{subsec:rat-Whittaker-Betti-periods} that the map
	\[
	\tilde{p}(\Sigma_f)^{-1}\mathcal{F}_{\Sigma_f} : \mathrm{Ind}(\mathcal{W}(\pi) \times \chi) \longrightarrow  \mathrm{H}^{b_{n-1}}(S_{n-1}, E_{\lambda}),
	\]
	is $\text{Aut}(\mathbb{C}/\overline{\mathbb{Q}})$-equivariant.
	Define the class
	\begin{equation}
		\vartheta_{\Sigma} \colonequals \tilde{p}(\Sigma_f)^{-1}\mathcal{F}_{\Sigma_f}(f^{\mathcal{W}}),
	\end{equation}
	where $f^{\mathcal{W}}_v$ is defined in \S \ref{subsec:choice-of-local-Whittaker-vectors-local-integral}. By construction $\vartheta_{\Sigma} \in \mathrm{H}^{b_{n-1}}(S_{n-1}, E_{\lambda, \overline{\mathbb{Q}}})$.
	Pulling back by the canonical map $p$ (See \S \ref{subsec:locally-symmetric-spaces-and-cohomology}) we get a class in $\mathrm{H}^{b_{n-1}}(\tilde{S}_{n-1}(R_f), E_{\lambda, \overline{\mathbb{Q}}})$ to be denoted by $\vartheta_{\Sigma}$.

	\subsection{Poincar\'{e} pairing of chosen Whittaker vectors} \label{subsec:Poincare-pairing-Whittaker-vector}
	Define a map $\mathcal{W}(\Pi_f) \times \mathrm{Ind}(\mathcal{W}(\pi) \times \chi) \xrightarrow{\wedge} \mathbb{C}$ as a composition of the various arrows in the diagram below so that the resulting diagram commutes.
	\[
	\begin{tikzcd}[column sep = 3ex]
		\mathcal{W}(\Pi_f) \times \mathrm{Ind}(\mathcal{W}(\pi) \times \chi) \arrow[rr, "\wedge"]  \arrow[d, hook, "p(\Pi_f)^{-1}\mathcal{F}_{\Pi_f} \times \tilde{p}(\Sigma_f)^{-1}\mathcal{F}_{\Sigma_f}"'] && \mathbb{C}\\
		\mathrm{H}^{b_n}_{\text{cusp}}(S_n, E_{\mu}) \times \mathrm{H}^{b_{n-1}}(S_{n-1}, E_{\lambda}) \arrow[d, hook] && \mathrm{H}^{t_{n-1}}_{c}(\tilde{S}_{n-1}, \mathbb{C}) \arrow[u, "\int"']\\
		\mathrm{H}^{b_n}_{c}(S_n, E_{\mu}) \times \mathrm{H}^{b_{n-1}}(S_{n-1}, E_{\lambda}) \arrow[rr, "\iota^* \times p^*"] && \mathrm{H}^{b_n}_{c}(\tilde{S}_{n-1}, E_{\mu}|_{\widetilde{S}_{n-1}}) \times \mathrm{H}^{b_{n-1}}(\tilde{S}_{n-1}, E_{\lambda}) \arrow[u, "\cup"].\\
	\end{tikzcd}
	\]
	Note that all the arrows except $\wedge$ in the diagram above is $\mathrm{Aut}(\mathbb{C}/\overline{\mathbb{Q}})$-equivariant, hence so is the map $\wedge$.
	The differential form $\vartheta_{\Pi}$ and $\vartheta_{\Sigma}$ is defined over $\overline{\mathbb{Q}}$, therefore
	\[
	\vartheta_{\Pi} \wedge \vartheta_{\Sigma} \in \im \left( \int (\mathrm{H}^{t_{n-1}}_{\text{c}}(\tilde{S}_{n-1}(R_f), \overline{\mathbb{Q}}) )\right) \cong \overline{\mathbb{Q}}.
	\]

		\noindent The construction of the differential forms above, the definition of the period integral \eqref{eqn:period-integral} coupled with the equality established in \eqref{eqn:integral-as-L-function} implies the equality below up to an algebraic number,
		\begin{equation} \label{eqn:main-theorem-identity}
			\vartheta_{\Pi}\wedge \vartheta_{\Sigma} \sim_{\overline{\mathbb{Q}}} \frac{1}{p(\Pi_f)\tilde{p}(\Sigma_f)} \frac{L(\frac{1}{2}, \Pi\times \Sigma)}{C}           \cdot p([\Pi_{\infty}], [\Sigma_{\infty}]^{\epsilon}).
		\end{equation}
	Here $p([\Pi_{\infty}], [\Sigma_{\infty}]^{\epsilon})$ denotes the complex number obtained by evaluating $\vartheta_{\Pi} \wedge \vartheta_{\Sigma}$ at the archimedean place. Using the description of the action of $\pi_0(K_{n-1, \infty})$ on orientation classes (See \S \ref{subsubsec:orientation-class-int}) and on the class $[\Pi_{\infty}] \wedge [\Sigma_{\infty}]^{\epsilon}$ (See \S \ref{subsubsec:critical-point}), we get $p([\Pi_{\infty}], [\Sigma_{\infty}]^{\epsilon})$ is invariant under $\pi_0(K_{n-1, \infty})$.
	Now, a result of Sun \cite[Theorem A]{Sun-nonvanishing} implies that $p([\Pi_{\infty}], [\Sigma_{\infty}]^{\epsilon}) \neq 0$.
	With $p(\Sigma_f) = \tilde{p}(\Sigma_f)\cdot C$, from equation \eqref{eqn:main-theorem-identity} we obtain
		\[
		L\left(\frac{1}{2}, \Pi \times \Sigma\right) \sim_{\,\overline{\mathbb{Q}}} p(\Pi_f)p(\Sigma_f)p([\Pi_{\infty}], [\Sigma_{\infty}]^{\epsilon})^{-1}.
		\]
		This completes the proof of Theorem \ref{thm:main-theorem}.

	\section{Proof of the Theorem \ref{thm:rationality-ratio-L-functions-GL(n)}}
Before we begin, we recall for convenience that $p(\Sigma_f) = \tilde{p}(\Sigma_f)\cdot C$, where $C$ is defined in \eqref{eqn:definition-of-C}. Plugging in the expression for $\tilde{p}(\Sigma_f)$ obtained from its definition we get that
		\begin{equation} \label{eqn:expression-for-period-of-Sigma}
			p(\Sigma_f) = p^{\mathrm{aut-B}}(\pi, \Sigma_{\infty})p^{\mathrm{W-aut}}(\pi_f)C.
		\end{equation}
	We will denote $\Sigma$ by $\Sigma(\pi, \chi_{n-2}, \chi_{n-1}),$ and the constant $p(\Sigma_f)$ by $p(\Sigma(\pi, \chi_{n-2}, \chi_{n-1}))$ in order to differentiate them for two different representations for different choices of characters in the definition of the representation.    
    Let $\pi_1, \dots, \pi_{(n-3)/2}$ be distinct cuspidal automorphic representation of $G_2$ such that $\pi_{i\infty} = D_{2\mu_i+n-2i}$, and $\Pi, \chi, \chi'$ be given as in the statement of the theorem. Grunwald--Wang theorem asserts that it is possible to choose an auxiliary Hecke character $\eta$ of finite-order such that $\eta_{\infty} = \epsilon(\Pi_{\infty}) \mathrm{sgn}$, $\eta$ is unramified outside of $S_{\pi}\cup S_{\chi} \cup S_{\chi'}$ and $\eta_v \notin \lbrace \chi_v, \chi'_v \rbrace$ for $v \notin S_{\pi}\cup S_{\chi} \cup S_{\chi'}$.
	We first consider the case $\mu_{\lfloor n/2 \rfloor} \geq 1$. There is the following equality,
	\[
	\frac{L(s, \Pi \times \Sigma(\pi, \chi', \eta))}{L(s, \Pi \times \Sigma(\pi, \chi, \eta))} = \frac{L(s+ \mu_{\lfloor n/2 \rfloor} + \frac{1}{2}, \Pi \times \chi')}{L(s+ \mu_{\lfloor n/2 \rfloor} + \frac{1}{2}, \Pi \times \chi)}.
	\]
	For $s=\frac{1}{2}$, Theorem \ref{thm:main-theorem} applied to the above equation implies
	\begin{equation} \label{eqn:ration-L-GL(n)}
		\frac{L(\mu_{\lfloor n/2 \rfloor} + 1, \Pi \times \chi')}{L(\mu_{\lfloor n/2 \rfloor} + 1, \Pi \times \chi)} \sim_{\overline{\mathbb{Q}}} \frac{p(\Sigma(\pi, \chi', \eta))}{p(\Sigma(\pi, \chi, \eta))}.
	\end{equation}
	Since $\pi$ remains unchanged, and the choices of $\chi,\chi'$ are made such that $\Sigma(\pi, \chi, \eta)_{\infty} = \Sigma(\pi, \chi', \eta)_{\infty}$, simplifying the above equation using \eqref{eqn:expression-for-period-of-Sigma} we get
	\[
	\frac{L(\mu_{\lfloor n/2 \rfloor} + 1, \Pi \times \chi')}{L(\mu_{\lfloor n/2 \rfloor} + 1, \Pi \times \chi)} \sim_{\overline{\mathbb{Q}}} \frac{\prod_iL(1-(\mu_{\lfloor n/2 \rfloor} + \frac{1}{2}), \pi_i \times \chi^{-1})}{\prod_iL(1-(\mu_{\lfloor n/2 \rfloor} + \frac{1}{2}), \pi_i \times \chi'^{-1})} \cdot \frac{L(2\mu_{\lfloor n/2 \rfloor}+2, \chi \times \eta^{-1})}{L(2\mu_{\lfloor n/2 \rfloor}+2, \chi' \times \eta^{-1})}.
	\]
	Note that $\frac{1}{2} - \mu_{\lfloor n/2 \rfloor}$ is a critical point for the $L$-functions $L(s, \pi_i \times \eta)$ for any finite-order Hecke character $\eta$ and $1 \leq i \leq (n-3)/2$. Since the rationality of the ratios of the $\mathrm{GL}(2)$ and $\mathrm{GL}(1)$ $L$-functions at all critical points is known from the work of Shimura, the proof of the theorem is complete in this case.\\
	
	\noindent
	Let us now assume that $n=5$, then $(n-3)/2 = 1$ and we call $\pi_1$ to be $\pi$. The only case that remains is to consider $\mu_{\lfloor n/2 \rfloor} = 0$.
	Let $\eta'$ be a finite-order Hecke character such that $\eta'_{\infty} = \mathrm{sgn}$, and $\eta'$ is unramified outside of $S_{\pi}$ and $L(\frac{1}{2}, \pi \times \eta') \neq 0$. Let $\eta'' = \eta'\chi$ and define $\tilde{\pi} \colonequals \pi \otimes \eta''$.
	Since we are allowed to choose $\pi$, with $\pi = \tilde{\pi}$ we may assume without loss of generality that $\pi$ is cohomological and $L(\frac{1}{2}, \pi \times \chi^{-1}) \neq 0$.
	Grunwald--Wang theorem asserts that it is possible to choose a finite-order Hecke character $\psi''$ such that $\psi''_{\infty} = \mathrm{sgn}$, whereas $\chi_{v} \neq \psi''_v$, $\chi'_{v} \neq \psi''_v$ for any $v \in S_{\pi} \cup S_{\chi} \cup S_{\chi'}$ and $\psi''_v$ is unramified for $v \notin S_{\pi} \cup S_{\chi} \cup S_{\chi'}$.
	A theorem of Rohrlich \cite[Theorem]{Rohrlich-89} implies that there exists $\psi'$ a character of finite-order which is unramified on $S_{\pi} \cup S_{\chi} \cup S_{\chi'} \cup \lbrace \infty \rbrace$ and $\psi'_{\infty}$ is trivial which furthermore satisfies $L(\frac{1}{2}, \pi \times \psi''\psi') \neq 0$.
	Let $\psi = \psi'\psi''$, then $\psi$ satisfies the following : $\psi_{\infty} = \mathrm{sgn}$, $\psi_v \neq \chi_v, \chi'_v$ for any $v \in S_{\pi} \cup S_{\chi} \cup S_{\chi'}$. These conditions on the characters ensure that the results from previous sections in particular Lemma \ref{lemma:intertwiner-rational-whittaker-model} and Theorem \ref{thm:main-theorem} can be applied.
	Similarly choose $\pi'$ which is a twist of $\pi$ by a Hecke character of finite-order which is trivial at infinity and unramified outside $S_{\pi}$, and another finite-order Hecke character $\psi'$ such that $L(\frac{1}{2}, \pi' \otimes \chi')L(\frac{1}{2}, \pi' \times \psi') \neq 0$. Consider the ratio,
	\[
	\frac{L(s, \Pi \times \Sigma(\pi', \chi', \psi'))}{L(s, \Pi \times \Sigma(\pi, \chi, \psi))} = \frac{L(s+\frac{1}{2}, \Pi \times \chi')}{L(s+\frac{1}{2}, \Pi \times \chi)}.
	\]
	For $s=\frac{1}{2}$, Theorem \ref{thm:main-theorem} applied to the above equation implies
	\begin{equation} \label{eqn:ration-L-GL(1)}
		\frac{L(1,\Pi \times \chi')}{L(1,\Pi \times \chi)} \sim_{\overline{\mathbb{Q}}} \frac{p(\Sigma(\pi', \chi', \psi'))}{p(\Sigma(\pi, \chi, \psi))}.
	\end{equation}
	Observe that $p^{\mathrm{aut-B}}(\pi, \Sigma_{\infty})$ and $p^{\mathrm{W-aut}}(\pi_f)$ is invariant under twist by a finite-order Hecke character which is trivial at infinity since these numbers arise when comparison of rational structures over $\overline{\mathbb{Q}}$. Using equation \eqref{eqn:expression-for-period-of-Sigma} coupled with this observation and proceeding as before we get
	\[
	\frac{L(1,\Pi \times \chi')}{L(1,\Pi \times \chi)} \sim_{\overline{\mathbb{Q}}} \frac{L(\frac{1}{2}, \pi' \times \chi'^{-1})L(2, \chi'^{-1}\psi')}{L(\frac{1}{2}, \pi \times \chi^{-1})L(2, \chi^{-1} \psi)}.
	\]
	Since the rationality of the ratios of the $\mathrm{GL}(2)$ and $\mathrm{GL}(1)$ $L$-functions is known, the proof of the theorem for $n = 5$ is complete.

\end{document}